\theoremstyle{plain} 
\newtheorem{thm}{Theorem}
\newtheorem{cor}[thm]{Corollary} 
\newtheorem{lem}[thm]{Lemma} 
\newtheorem{prop}[thm]{Proposition}
\theoremstyle{definition} 
\newtheorem{defn}{Definition}
\newtheorem{ex}{Example}
\title{Singular branched covers of four-manifolds}
\author{Patricia Cahn and Alexandra Kjuchukova}
\thanks{This work was partially supported by the Simons Foundation/SFARI (Grant Number 523862, P. Cahn)}
\begin{document}

\begin{abstract} 
Consider a dihedral cover $f: Y\to X$ with  $X$ and $Y$ four-manifolds and $f$ branched along an  oriented surface  embedded in $X$ with isolated cone singularities. We prove that only a slice knot can arise as the unique singularity on an irregular dihedral cover $f: Y\to S^4$ if $Y$ is homotopy equivalent to $\mathbb{CP}^2$ and construct an explicit infinite family of such covers with $Y$ diffeomorphic to $\mathbb{CP}^2$.  An obstruction to a knot being homotopically ribbon arises in this setting, and we describe a class of potential counter-examples to the Slice-Ribbon Conjecture. 

Our tools include lifting a trisection of a singularly embedded surface in a four-manifold $X$ to obtain a trisection of the corresponding irregular dihedral branched cover of $X$, when such a cover exists. We also develop a combinatorial procedure to compute, using a formula by the second author, the contribution to the signature of the covering manifold which results from the presence of a singularity on the branching set. \end{abstract}
\maketitle
MSC classes: 57M27 (Primary), 57M25 and 57M12 (Secondary).

\section{Introduction}

Given a four-manifold $X$, what four-manifolds can be realized as branched covers of $X$? We approach this question by relating invariants of the covering manifold to invariants of the branching set. Our focus is on irregular dihedral covers. Following the set-up of~\cite{kjuchukova2016classification}, we consider branching sets which are closed oriented singularly embedded surfaces; the singularities considered are cones on  knots. 

Our first theorem classifies irregular dihedral covers $f: \mathbb{CP}^2\to S^4$. This result provides a roadmap to search for counter-examples to the Slice-Ribbon Conjecture, using a knot invariant, $\Xi_p$, defined in~\cite{kjuchukova2016classification}  for any knot which arises as a singularity on a $p$-fold irregular dihedral cover between any two four-manifolds.  If a knot $\alpha$ arises as the unique singularity on a $p$-fold irregular dihedral cover $f: Y\to S^4$ with $Y$ a manifold, we say $\alpha$ is {\it $p$-admissible}. We prove that a 3-admissible knot $\alpha$ with $|\Xi_3(\alpha)|\neq 1$ can not be homotopically ribbon. Therefore, evaluating $\Xi_3(\alpha)$ for admissible slice singularities $\alpha$ could lead to finding a non-ribbon slice knot. On the other hand, if it turns out that $|\Xi_3(\alpha)|= 1$ for all admissible slice singularities $\alpha$, then $\Xi_3$ provides a potentially new sliceness obstruction. We also derive a more general homotopy ribbon obstruction using $\Xi_p$.

The homotopy ribbon obstruction arising from the signature defect can be extended to a larger class of knots, call them {\it rationally $p$-admissible}. These are knots satisfying the first and third criterion for admissibility outlined in Section~\ref{criteria} but whose $p$-fold dihedral covers are rational homology spheres, rather than necessarily three-spheres. A rationally $p$-admissible knot $\alpha$ would thus arise as a singularity on a dihedral cover $Y\to X$ with $X$ a manifold and $Y$ a rational Poincar\'e Duality space with a singular point $\mathfrak{z}$. (The link of the singularity $\mathfrak{z}\in Y$  is the dihedral cover of $\alpha$.) Ribbon obstructions for rationally $p$-admissible knots will be the subject of future work.

In Theorem~\ref{procedurethm} we give a combinatorial procedure for calculating $\Xi_p(\alpha)$ from a diagram of $\alpha$, using the formula provided in~\cite{kjuchukova2016classification} as well as the algorithm developed in~\cite{cahnkjuchukova2016linking}. In addition to its purely knot-theoretic interest, this procedure for evaluating $\Xi_p$ allows us to compute signatures of dihedral covers of four-manifolds with singular branching sets.

$D_p$ denotes the dihedral group of order $2p$. In this paper, $p$ is odd.

\begin{defn} 
\label{dih}
Let $X$ be a manifold and $B\subset X$ a codimension-two submanifold with the property that a surjection $\phi: \pi_1(X-B, x_0) \to D_p$ exists. Denote by $\mathring{Y}$ the covering space of $X-B$ corresponding to the subgroup $\phi^{-1}(\mathbb{Z}/2\mathbb{Z})\subset \pi_1(X-B, x_0)$. The completion of $\mathring{Y}$  to a branched cover $f: Y\to X$ is called the {\it irregular dihedral} $p$-fold cover of $X$ branched along $B$.
\end{defn}

Let us comment briefly on why we choose to study these covers. First, they are the subject of Hilden~\cite{hilden1974every} and Montesinos's~\cite{montesinos1974representation} strikingly general result that every closed oriented three-manifold is a { three-fold} cover of $S^3$ branched along {a knot}. How well does this result generalize to the next dimension? Secondly, irregular dihedral covers of four-manifolds provide a rich source of examples due to the fact that they allow manifolds to cover manifolds even when the branching sets are singularly embedded surfaces; Section~\ref{admiss} offers a discussion of the singularities that can arise on these embeddings. Thirdly, methods for analyzing irregular dihedral covering maps between manifolds were developed by the second author, who gave a formula for the signature of the covering manifold in terms of data about the base and the singularities on the branching set~\cite{kjuchukova2016classification}. The tools developed therein allow us to characterize irregular dihedral covers $f: \mathbb{CP}^2\to S^4$ and also to construct an infinite family of examples of such covers. Our proof that the manifolds constructed are homeomorphic to $\mathbb{CP}^2$ relies on determining their intersection forms; we verify this conclusion independently by obtaining trisections of the branched covers constructed. 

The singularities we consider on branching sets are defined next. Denote by $\alpha\subset S^3$ a non-trivial smooth knot.

\begin{defn} 
\label{sing}
Let $X$ be a topological four-manifold, $B\subset X$ be a properly embedded surface, and $z\in B$ a point on the interior of $B$. Assume there exist a small open disk $D_z$ about $z$ in $X$ such that there is a homeomorphism of pairs $(D_z - z,B - z) \cong (S^3 \times (0,1),\alpha\times(0,1))$. We say that the embedding of $B$ in $X$ has a {\it singularity of type $\alpha$} at $z$.
\end{defn}

We are then interested in irregular dihedral covers $f: Y\to X$, where $X$ and $Y$ are closed oriented four-manifolds, and such that the branching set of $f$ is a closed oriented connected surface, smoothly embedded in $X$ except for finitely many singularities in the above sense. Given such a map $f$ and a locally flat point $b$ on the branching set of $f$, we require that $f$ admit a parametrization as a smooth branched cover over a small neighborhood  $D_b$ of $b$ in $X$. We further assume that $f^{-1}(\partial D_z)$ is connected and that the restriction and $f_|: {f^{-1}(D_z)}\to D_z$ is the cone on the map $f_|: {f^{-1}(\partial D_z)}\to \partial D_z$. We call such a map $f$ a {\it singular dihedral branched cover}. 

Given a singular dihedral branched cover $f: Y\to X$ between four-manifolds, the second author gave a formula for the signature of $Y$ in terms of data about $X$, the branching set and the singularities of the embedding~\cite{kjuchukova2016classification}. The contribution to this signature resulting from the presence of each singularity $\alpha$ is the quantity $\Xi_p(\alpha)$ introduced previously. We call the integer $\Xi_p(\alpha)$ the {\it signature defect} associated to the knot type $\alpha$. It is defined whenever the knot $\alpha$ arises as a singularity on a dihedral cover between four-manifolds (see Section~\ref{admiss}). By Proposition~2.7 of~\cite{kjuchukova2016classification}, if an admissible singularity $\alpha$ admits only one equivalence class of surjective homomorphisms to the dihedral group $D_p$, $\Xi_p(\alpha)$ is an invariant of $\alpha$. A straight-forward generalization shows that, if $\alpha$ admits multiple such surjections, $\Xi_p(\alpha)$ is an invariant of $\alpha$, together with a choice of coloring. A combinatorial procedure for calculating $\Xi_p(\alpha)$ from a colored diagram of $\alpha$ is outlined in the next section and illustrated on two examples in Section~\ref{examples}.

\begin{thm} 
\label{manyCP2s}
Let $f: Y \to S^4$ a $p$-fold singular dihedral branched cover in the above sense, with $Y$ an oriented manifold homotopy equivalent to $\mathbb{C}P^2$ or $\overline{\mathbb{C}P}^2$. Denote the number of singular points by $m$ and the genus of the branching set by $g$. Then $p=3$ and $g=\frac{m-1}{2}$. Moreover, for $Y$ diffeomorphic to $\mathbb{C}P^2$ or $\overline{\mathbb{C}P}^2$, an infinite family of such covers with $g=0$ and $m=1$ exists, branched along two-spheres embedded in $S^4$ with pairwise distinct two-bridge singularities. 
\end{thm}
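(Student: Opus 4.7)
\emph{Part 1: the constraints $p=3$ and $g=(m-1)/2$.} The strategy is Euler-characteristic counting. In a $p$-fold irregular dihedral cover, a meridian of the smooth part of $B$ acts on the $p$ cosets $D_p/\langle s\rangle$ as the reflection $s$, which has exactly one fixed coset and $(p-1)/2$ two-element orbits; consequently the preimage of a smooth branch point consists of $(p+1)/2$ points. At a cone singularity of type $\alpha$, the requirement that $Y$ be a manifold forces the $p$-fold irregular dihedral cover of $S^3$ branched over $\alpha$ to be $S^3$, so each singular point has a unique preimage in $Y$. A direct count gives
\[
\chi(Y)\;=\;p\,\chi(S^4)-\tfrac{p-1}{2}\bigl(\chi(B)+m\bigr),
\]
and substituting $\chi(Y)=3$, $\chi(S^4)=2$, $\chi(B)=2-2g$ simplifies to
\[
m-2g\;=\;2-\tfrac{2}{p-1}.
\]
The left-hand side is an integer, so $(p-1)\mid 2$; since $p$ is odd, this forces $p=3$, and substituting back gives $m=2g+1$, i.e., $g=(m-1)/2$.

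\emph{Part 2: construction of the infinite family.} For each member of the family I would start with a two-bridge knot $\alpha$ that is smoothly slice, carries a Fox $3$-coloring, and whose $3$-fold irregular dihedral cover of $S^3$ is $S^3$; I expect an infinite such set to be available among slice two-bridge knots $K(p,q)$ with $3 \mid p$. I would then construct $B_\alpha \subset S^4$ by decomposing $S^4 = B^4_+ \cup_{S^3} B^4_-$, placing the cone on $\alpha \subset \partial B^4_+$ inside $B^4_+$, and capping off with a smoothly embedded slice disk $D_\alpha$ for $\alpha$ inside $B^4_-$. This produces an embedded two-sphere with a single singular point of type $\alpha$. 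The $3$-coloring of $\alpha$ extends to a surjection $\rho \colon \pi_1(S^4 \setminus B_\alpha) \twoheadrightarrow D_3$, with compatibility across $D_\alpha$ verified from a ribbon presentation.

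\emph{Identification of $Y$ and the main obstacle.} For the resulting cover $f\colon Y \to S^4$, Part 1 gives $\chi(Y) = 3$, and simple connectivity of $Y$ follows from a van Kampen argument applied to the preimages $f^{-1}(B^4_\pm)$, both of which are simply connected by standard branched-cover computations in $B^4$. The signature $\sigma(Y)$ is then computed via Theorem~\ref{procedurethm} and the formula of~\cite{kjuchukova2016classification}; establishing $|\Xi_3(\alpha)| = 1$ for each $\alpha$ in the family yields $\sigma(Y) = \pm 1$, and Freedman's theorem identifies $Y$ topologically with $\mathbb{C}P^2$ or $\overline{\mathbb{C}P}^2$. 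The principal difficulty is upgrading this to a diffeomorphism: my plan is to lift a bridge trisection of the singular pair $(S^4, B_\alpha)$ through $f$ to produce a trisection of $Y$, and then recognize the resulting trisection diagram as the standard genus-$1$ trisection of $\mathbb{C}P^2$ (or its mirror). Pairwise distinctness of the singularities in the family is immediate from pairwise distinctness of the underlying knot types, so the resulting family of covers is genuinely infinite.
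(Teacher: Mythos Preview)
Your Part~1 is exactly the paper's argument: the same Euler-characteristic count, the same integrality constraint forcing $p=3$, and the same relation $g=(m-1)/2$.

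Your Part~2 is also structurally the same as the paper's---a specific infinite family of two-bridge ribbon knots is used (in the paper, the family $\alpha_{6i}$ of Figure~\ref{singularityfamily.fig}), the branching sphere is built as cone-union-ribbon-disk, Lemma~3.3 of~\cite{kjuchukova2016classification} gives the extension of the coloring and simple connectivity of the cover, and the diffeomorphism is pinned down via the genus-one trisection coming from a $(3;1,2,2)$ bridge trisection of the branching set (the paper packages this as Theorem~\ref{genus-one}, invoking the classification of $(1;0,0,0)$ trisections).

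There is, however, one unnecessary detour in your plan that would become a genuine obstacle if carried out literally. You propose to establish $\sigma(Y)=\pm 1$ by computing $|\Xi_3(\alpha)|=1$ via Theorem~\ref{procedurethm} for each $\alpha$ in the family. That is an infinite, diagram-by-diagram computation with no uniform mechanism supplied. The paper bypasses this entirely: once $Y$ is simply connected and $\chi(Y)=3$, one has $\operatorname{rk} H_2(Y;\mathbb{Z})=1$, so the intersection form is rank one and unimodular, hence $\langle\pm 1\rangle$, and $\sigma(Y)=\pm 1$ follows for free. No signature-defect computation is needed to identify $Y$; the role of $\Xi_p$ in the paper is the converse direction (the ribbon obstruction), not the construction. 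Replacing your $\Xi_3$ step with this rank observation makes your argument complete and identical to the paper's.
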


 \begin{cor}  Let $f: Y \to S^4$ be a $p$-fold singular dihedral branched cover with $Y$ an oriented manifold homotopy equivalent to $\mathbb{C}P^2$ or $\overline{\mathbb{C}P}^2$. If the branching set of $f$ has only one singularity $\alpha$, then $\alpha$ is a slice knot. 
 	\end{cor}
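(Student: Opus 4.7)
The plan is to derive the corollary directly from Theorem~\ref{manyCP2s} by a geometric argument that reads off a slice disk from the branching set.

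First, I would invoke Theorem~\ref{manyCP2s} with $m=1$. This immediately forces $p=3$ and $g=\frac{m-1}{2}=0$, so the branching set $B\subset S^4$ is a topological two-sphere, smoothly embedded away from a single singular point $z$ of type $\alpha$. In particular, apart from the cone singularity at $z$, the embedding of $B$ is smooth.

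Next, I would extract a slice disk from $B$. By Definition~\ref{sing}, there is a small open disk neighborhood $D_z$ of $z$ in $S^4$ together with a homeomorphism of pairs $(D_z-z, B-z)\cong(S^3\times(0,1),\alpha\times(0,1))$. Choosing a closed sub-ball $\bar D\subset D_z$ small enough to contain $z$ and meet $B$ in the cone on $\alpha$, the complement $S^4\setminus \mathrm{int}(\bar D)$ is diffeomorphic to a standard four-ball $B^4$, with $\partial B^4=S^3$ containing $\alpha$ as the boundary of $B\cap\bar D$. The surface $B\cap(S^4\setminus\mathrm{int}(\bar D))$ is therefore a properly embedded compact surface in $B^4$ whose boundary is $\alpha$. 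Because the only singular point of $B$ lies in the removed ball, and $g=0$ means $B$ was a sphere, this surface is a disk; moreover the disk is smoothly embedded, since $B$ is smoothly embedded everywhere away from $z$. Hence $\alpha$ bounds a smooth disk in $B^4$ and so is slice.

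There is essentially no obstacle here: once one has $g=0$ and $m=1$ from Theorem~\ref{manyCP2s}, the conclusion is immediate from the local model for the singularity. The only minor care needed is to verify that cutting out a neighborhood of the singularity produces a genuinely smoothly embedded disk rather than merely a locally flat one, but this follows from the hypothesis that $B$ is smooth away from its singular points together with the product structure $(S^3\times(0,1),\alpha\times(0,1))$ near $z$, which guarantees that for small enough $\bar D$ the boundary circle of the disk is smoothly embedded in $\partial\bar D$.
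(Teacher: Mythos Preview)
Your argument is correct and matches the paper's intended reasoning: the paper states this corollary without a separate proof, treating it as immediate from Theorem~\ref{manyCP2s}, and your write-up spells out exactly the implicit step---once $m=1$ forces $g=0$, the branching set is a two-sphere, so deleting the cone neighborhood of the singularity leaves a properly embedded disk for $\alpha$ in $B^4$. Your attention to the smoothness of the resulting slice disk is appropriate and consistent with the paper's standing hypotheses on the branching set.
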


The above theorem can be regarded as a classification of singular dihedral covers $f: \mathbb{CP}^{2} \to S^4$ in terms of their degree, branching set and number of singularities. Since sliceness is a necessary condition for a knot to occur as the only singularity on such a cover, it is natural to ask: which slice knots arise in this context?

\begin{defn} 
\label{hom-rib}
Let $\alpha\subset S^3$ be a slice knot and  $D\subset B^4$ a slice disk for $\alpha$. If the map $\iota_\ast: \pi_1(S^3-\alpha, x_0)\to \pi_1(B^4-D, x_0)$ induced by inclusion is surjective, we say that $D$ is a {\it homotopically ribbon} disk. A knot which admits such a disk is called a {\it homotopically ribbon} knot.
\end{defn}

We use dihedral covers and the signature defect to derive an obstruction to a knot being homotopically ribbon. 

\begin{thm} 
\label{homoCP2}
Let $Y$ be a closed oriented connected topological four-manifold and $f: Y \to S^4$ a $3$-fold irregular dihedral branched cover with branching set a two-sphere $B$ embedded in $S^4$ with one singularity of type $\alpha$. Assume further that $\alpha$ is a smoothly homotopically ribbon knot and that $B$ is the boundary union of a smooth homotopically ribbon disk $D$ for $\alpha$ and the cone on $\alpha$. Then $Y$ is a smooth manifold homeomorphic to $\mathbb{CP}^{2}$. If  $\alpha$ is topologically homotopically ribbon and $D$  locally flat, then $Y$ has the homotopy type of $\mathbb{CP}^{2}$. 
\end{thm}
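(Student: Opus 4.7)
The plan is to decompose $S^4$ along the separating three-sphere that splits $B$ into $D$ and $c(\alpha)$, analyze the cover over each side, and then use the invariants of $Y$ to identify it with $\mathbb{CP}^2$. Write $S^4 = B_1^4 \cup_{S^3} B_2^4$ with $D \subset B_1^4$ and $c(\alpha) \subset B_2^4 = c(S^3)$. The defining cone structure of $f$ near the singularity, combined with the hypothesis that $Y$ is a four-manifold, forces the three-fold dihedral cover of $S^3$ branched along $\alpha$ to be $S^3$ (so $\alpha$ is $3$-admissible), giving $f^{-1}(B_2^4) = c(S^3) = B^4$. Setting $Y_1 := f^{-1}(B_1^4)$ yields the decomposition $Y = Y_1 \cup_{S^3} B^4$, and the task reduces to identifying $Y_1$.

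The main technical step will be to show $\pi_1(Y_1) = 1$ using the homotopically ribbon hypothesis. Let $\phi : \pi_1(B_1^4 \setminus D) \to D_3$ denote the monodromy and $H = \phi^{-1}(\mathbb{Z}/2)$, so the unbranched cover $\tilde Y_1$ satisfies $\pi_1(\tilde Y_1) = H$. The surjectivity of $\iota_* : \pi_1(S^3 \setminus \alpha) \to \pi_1(B_1^4 \setminus D)$ from Definition~\ref{hom-rib}, pulled back to the index-three preimages, yields a surjection $\pi_1(\tilde\Sigma) \twoheadrightarrow H = \pi_1(\tilde Y_1)$, where $\tilde\Sigma$ is the unbranched dihedral cover of $S^3 \setminus \alpha$ induced by $\phi\circ\iota_*$. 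Completing these to branched covers amounts to normally killing the meridians of the lifted branching sets; since $\iota$ sends meridians of $\alpha$ to meridians of $D$, the quotient map $\pi_1(\Sigma) \twoheadrightarrow \pi_1(Y_1)$ remains surjective. But $\Sigma$ is the three-fold dihedral cover of $S^3$ along $\alpha$, which is $S^3$ by admissibility, so $\pi_1(Y_1) = 1$. Van Kampen applied to $Y = Y_1 \cup_{S^3} B^4$ then yields $\pi_1(Y) = 1$.

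A stratification computation yields $\chi(Y) = 3\chi(S^4 \setminus B) + 2\chi(B \setminus \{z\}) + 1 = 0 + 2 + 1 = 3$, where the factor of $2$ reflects that the smooth part of $B$ has two preimage sheets (one unramified and one doubly ramified), each mapping homeomorphically to $B \setminus \{z\}$, while the single preimage of the cone point contributes $1$. Combined with $\pi_1(Y) = 1$, Poincar\'e duality forces $b_2(Y) = 1$, and unimodularity of the intersection form on a closed oriented four-manifold forces $\sigma(Y) = \pm 1$. Freedman's classification of simply connected closed oriented topological four-manifolds then identifies $Y$ with $\mathbb{CP}^2$ after choosing the orientation. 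In the smooth case, smoothness of $D$ together with smoothness of the link $S^3$ at the cone singularity (so that the cone is smoothly $B^4$) endow $Y$ with a compatible smooth structure, giving the first conclusion; in the topological case with $D$ only locally flat, the same invariants identify the homotopy type of $Y$ with that of $\mathbb{CP}^2$. The main obstacle will be verifying cleanly that surjectivity on $\pi_1$ of the unbranched covers descends through the filling-in of meridians to the branched completions; this relies on the compatibility of meridians under $\iota$ and the admissibility of $\alpha$ to trivialize $\pi_1(\Sigma)$.
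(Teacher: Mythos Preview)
Your proof is correct and follows essentially the same route as the paper's: compute $\chi(Y)=3$, establish $\pi_1(Y)=1$ from the homotopically ribbon hypothesis, deduce $b_2(Y)=1$ and $\sigma(Y)=\pm1$, then invoke Freedman (with smoothness of $Y$ in the smooth case ensuring trivial Kirby--Siebenmann invariant and hence a genuine homeomorphism, while in the locally flat case only the homotopy type is pinned down). The only difference is that the paper outsources the simple-connectivity step to Lemma~3.3 of \cite{kjuchukova2016classification}, whereas you unpack that argument explicitly via the surjection $\pi_1(S^3\setminus\alpha)\twoheadrightarrow\pi_1(B^4\setminus D)$, its lift to the index-three subgroups, and the passage to branched completions by killing meridians; this is exactly the content behind that cited lemma, so your version is a self-contained rendering of the same idea.
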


In contrast with Theorem~\ref{manyCP2s}, where we use a particular family of singularities, the above theorem does not determine the diffeomorphism classes of the smooth manifolds constructed.  Note also that one could potentially obtain a fake (non-smooth) $\mathbb{CP}^2$ as a branched cover of the four-sphere using a singularity which is topologically but not smoothly slice. 

\begin{thm} 
\label{ribbon-defect}
Let $\alpha$ be a homotopically ribbon knot. If $\Xi_p(\alpha)$ is defined, it satisfies the equation $|\Xi_p(\alpha)|
\leq (p-1)/2$.  In particular, given $f: Y\to X$ a 3-fold singular irregular dihedral cover whose branching set $B$ has a single singularity $\alpha$, $|\Xi_3(\alpha)|=1$, and
\begin{equation}
\label{Xi=1}
\sigma(Y)=3\sigma(X) -\frac{p-1}{4} e(B) \pm{1}, 
\end{equation}
where $e(B)$ denotes the normal Euler number of the embedding of $B$ in $X$. 
\end{thm}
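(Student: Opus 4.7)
The plan is to bound $|\Xi_p(\alpha)|$ by constructing an auxiliary smooth 4-manifold $\tilde{W}$ from a homotopy ribbon disk for $\alpha$, controlling its second Betti number, and identifying $\sigma(\tilde{W})$ with $-\Xi_p(\alpha)$ via a cut-and-paste argument.

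First I would fix the dihedral coloring $\phi$ on $\pi_1(S^3 - \alpha)$ realizing $\alpha$ as a $p$-admissible singularity (so $\Sigma_p(\alpha) = S^3$), choose a homotopy ribbon disk $D \subset B^4$ for $\alpha$, and verify that $\phi$ descends to a coloring $\pi_1(B^4 - D) \to D_p$. The key point here is compatibility: any element of $\ker(\pi_1(S^3 - \alpha) \twoheadrightarrow \pi_1(B^4 - D))$ must land in $\ker \phi$, which I would verify using that the covering subgroup $\phi^{-1}(\mathbb{Z}/2)$ has quotient $\Sigma_p(\alpha) = S^3$ simply connected. Let $\tilde{W} \to B^4$ denote the resulting $p$-fold irregular dihedral branched cover along $D$; since $D$ is smoothly (respectively locally flatly) embedded, $\tilde{W}$ is a smooth (resp.\ locally flat) compact 4-manifold with boundary $\Sigma_p(\alpha) = S^3$.

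Next I would compute $\chi(\tilde{W}) = p\chi(B^4 \setminus D) + \chi(\tilde{D}) = (p+1)/2$, where the upstairs branch locus $\tilde{D}$ is a disjoint union of $(p+1)/2$ disks (one of ramification index $1$, and $(p-1)/2$ of ramification index $2$, reflecting the cycle structure of a reflection in $D_p$ acting on $D_p / (\mathbb{Z}/2)$). The homotopy ribbon condition restricts to a surjection $\pi_1(\mathring{\Sigma}_p(\alpha)) \twoheadrightarrow \pi_1(\mathring{\tilde{W}})$ on the subgroups classifying the unbranched covers, which descends under meridian-killing to $H_1(\Sigma_p(\alpha); \mathbb{Q}) \twoheadrightarrow H_1(\tilde{W}; \mathbb{Q})$; since $\Sigma_p(\alpha) = S^3$, this forces $b_1(\tilde{W}) = 0$. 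A doubling argument (or Poincar\'e--Lefschetz duality with the long exact sequence of $(\tilde{W}, \partial \tilde{W})$, using $\partial \tilde{W} = S^3$) then yields $b_3(\tilde{W}) = b_1(\tilde{W}) = 0$, so $b_2(\tilde{W}) = (p-1)/2$. The intersection form on $H_2(\tilde{W}; \mathbb{Q})$ is non-degenerate since $\partial \tilde{W}$ is a rational homology sphere, giving $|\sigma(\tilde{W})| \leq b_2(\tilde{W}) = (p-1)/2$.

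Finally I would identify $\sigma(\tilde{W})$ with $-\Xi_p(\alpha)$. Given any singular dihedral cover $f: Y \to X$ realizing $\alpha$ (one exists because $\Xi_p(\alpha)$ is defined), form $f': Y' \to X$ by replacing the cone $C_\alpha$ in the branching set $B$ with $D$, framed to match the framing inherited from the singular cone. The cover $f'$ is smooth, and near the former singularity $Y'$ differs from $Y$ by excising the cone neighborhood $C_{\Sigma_p(\alpha)} = C_{S^3} = B^4$ and gluing in $\tilde{W}$ along $S^3$. Novikov additivity (valid since $S^3$ is a rational homology sphere) gives $\sigma(Y') = \sigma(Y) + \sigma(\tilde{W})$, and comparing the smooth formula $\sigma(Y') = p\sigma(X) - \frac{p-1}{4} e(B')$ with the singular formula $\sigma(Y) = p\sigma(X) - \frac{p-1}{4} e(B) + \Xi_p(\alpha)$ from the second author's paper (using $e(B') = e(B)$ under the matched framing) yields $\Xi_p(\alpha) = -\sigma(\tilde{W})$. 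Thus $|\Xi_p(\alpha)| \leq (p-1)/2$. For $p=3$, $b_2(\tilde{W}) = 1$ forces the non-degenerate rank-one rational intersection form to have signature $\pm 1$, so $|\Xi_3(\alpha)| = 1$ and \eqref{Xi=1} follows by substitution into the general signature formula.

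The hard parts I expect are twofold: (i) verifying that the chosen dihedral coloring genuinely descends to $\pi_1(B^4 - D)$, which requires the kernel of the homotopy ribbon surjection to lie inside $\ker \phi$ and uses $p$-admissibility in an essential way, and (ii) arranging the framing of $D$ to match that carried implicitly by the singular cone, so that $e(B) = e(B')$ and the cut-and-paste computation isolates $\Xi_p(\alpha)$ without spurious Euler-number corrections. Once these compatibilities are in place, the Betti number count is immediate from the homotopy ribbon surjection and the Euler characteristic identity.
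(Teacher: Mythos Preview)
Your approach is essentially the paper's, repackaged in a way that introduces avoidable complications. The paper works directly with the \emph{closed} manifold $M$ obtained as the $p$-fold irregular dihedral cover of $S^4$ branched along the two-sphere $S = C(\alpha) \cup_\alpha D$; note that $M = \tilde{W} \cup_{S^3} B^4$, so $\sigma(M) = \sigma(\tilde{W})$. Applying the singular signature formula directly to $g: M \to S^4$ gives $\sigma(M) = \Xi_p(\alpha)$ in one line, since $\sigma(S^4) = 0$ and $e(S) = 0$. This sidesteps your cut-and-paste comparison with an auxiliary $Y \to X$, so your ``hard part (ii)'' about matching framings never arises. The paper then cites the construction of Theorem~1.5 (and Lemma~3.3) of~\cite{kjuchukova2016classification} to get that $M$ is simply connected, and Equation~\ref{eulerchar.eqn} gives $\chi(M) = (p+3)/2$, whence $b_2(M) = (p-1)/2$ and $|\sigma(M)| \le (p-1)/2$. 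Your Betti-number computation for $\tilde{W}$ via the homotopy-ribbon surjection and Poincar\'e--Lefschetz duality is equivalent to this, just carried out on the manifold-with-boundary instead of the closed cap-off.

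Regarding your ``hard part (i)'': the extension of the coloring over the slice disk is precisely Lemma~3.3 of~\cite{kjuchukova2016classification}, which the paper simply invokes. Your proposed justification---that $\ker\big(\pi_1(S^3-\alpha) \twoheadrightarrow \pi_1(B^4-D)\big) \subset \ker\phi$ because the branched cover $\Sigma_p(\alpha)$ is simply connected---does not work as stated. Simple-connectivity of the irregular branched cover tells you that the covering subgroup $\phi^{-1}(\mathbb{Z}/2)$ is normally generated by squares of meridians; it says nothing directly about $\ker\phi$, and there is no evident mechanism forcing the homotopy-ribbon kernel (a normal subgroup whose image in $D_p$ would a priori be one of the cyclic rotation subgroups) to land there. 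You should either supply the actual argument of the cited lemma or cite it.
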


A quick remark on the sign of the last term in the above formula. It is evident that a knot arises as a singularity on a dihedral cover if and only if its mirror image does. Moreover, it follows directly from Equation~\ref{eqXi} that taking the mirror of $\alpha$ reverses the sign of the signature defect $\Xi_p(\alpha)$. With these considerations in mind, we occasionally use $\alpha$ to denote both mirror images of a knot. This convention leaves the sign of the defect term in the above formula ambiguous. 

\begin{cor} 
\label{twobridge}
Let $\alpha$ be a $p$-colorable two-bridge slice knot. Then $\Xi_p(\alpha)$ is defined, and $|\Xi_p(\alpha)|\leq (p-1)/2$.  In particular, if $p=3$, then $|\Xi_3(\alpha)|=1$, and in the notation of Theorem ~\ref{ribbon-defect}, Equation~\ref{Xi=1} holds. 
\end{cor}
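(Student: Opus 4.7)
The plan is to deduce Corollary~\ref{twobridge} from Theorem~\ref{ribbon-defect} by verifying, for a $p$-colorable two-bridge slice knot $\alpha$, that $\alpha$ is homotopically ribbon and that $\Xi_p(\alpha)$ is defined.

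First I would establish that $\alpha$ is homotopically ribbon. By Lisca's theorem, every slice two-bridge knot is ribbon. It is standard that any ribbon disk $D \subset B^4$ is homotopically ribbon in the sense of Definition~\ref{hom-rib}: a ribbon presentation realizes the complement of $D$ in $B^4$ as $(S^3-\alpha)$ with only $1$- and $2$-handles attached, so the inclusion-induced map $\pi_1(S^3-\alpha)\to \pi_1(B^4-D)$ is surjective.

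Next I would construct a singular dihedral branched cover of $S^4$ witnessing that $\Xi_p(\alpha)$ is defined. Writing $S^4 = B^4 \cup B^4$, place a ribbon disk $D$ for $\alpha$ in one ball and the cone on $\alpha$ in the other. This produces a two-sphere $B \subset S^4$ smoothly embedded except at a single cone singularity of type $\alpha$. A van Kampen computation, together with the homotopy-ribbon property of $D$ (and the fact that the complement in $B^4$ of the cone on $\alpha$ deformation retracts onto $S^3-\alpha$), shows that the surjection $\pi_1(S^3-\alpha) \twoheadrightarrow D_p$ coming from the given $p$-coloring extends to a surjection $\phi: \pi_1(S^4-B) \twoheadrightarrow D_p$. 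Definition~\ref{dih} then yields an irregular dihedral branched cover $f: Y \to S^4$ with branching set $B$. The link of the unique cone point of $Y$ is the irregular dihedral $p$-fold cover of $S^3$ branched along $\alpha$; since $\alpha$ is a $p$-colorable two-bridge knot, this cover is classically $S^3$. Hence $Y$ is a closed topological four-manifold, $f$ is a singular dihedral branched cover between four-manifolds in the sense of the paper, and $\Xi_p(\alpha)$ is defined.

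With both hypotheses of Theorem~\ref{ribbon-defect} now in place, the bound $|\Xi_p(\alpha)| \leq (p-1)/2$ is immediate. Specializing to $p = 3$ yields $|\Xi_3(\alpha)| \leq 1$, and the ``In particular'' clause of Theorem~\ref{ribbon-defect}, applied to the cover $f$ just constructed, upgrades the inequality to $|\Xi_3(\alpha)| = 1$ and produces Equation~\ref{Xi=1}. I expect the most delicate point to be identifying the link of the singularity with $S^3$; beyond that, the corollary is essentially a repackaging of Lisca's theorem together with Theorem~\ref{ribbon-defect}.
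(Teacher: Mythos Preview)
Your proposal is correct and follows essentially the same route as the paper: invoke Lisca's theorem to get that a two-bridge slice knot is ribbon (hence homotopically ribbon), verify that $\Xi_p(\alpha)$ is defined, and then apply Theorem~\ref{ribbon-defect}. The only difference is cosmetic: where the paper dispatches $p$-admissibility by citing~\cite{kjuchukova2016classification}, you spell out the construction of the singular cover and the identification of the link of the singularity with $S^3$, which is exactly the content behind that citation.
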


We summarize the knot-theoretic questions motivated by the above results. First,  for $\alpha$ a slice knot such that $\Xi_3(\alpha)$ is defined, does the equality $|\Xi_3(\alpha)|=1$ always hold?  More generally, if $\alpha$ is slice and $p$-admissible, does the inequality $|\Xi_p(\alpha)|\leq (p-1)/2$ always hold? If the answer is no, the Slice-Ribbon Conjecture is false. If the answer is yes,  $\Xi_p(\alpha)$ provides a sliceness obstruction.  In the latter case, we ask further: for $\alpha$ and $\beta$ concordant knots with $\Xi_p(\alpha)$ and $\Xi_p(\beta)$ defined, does the equality $|\Xi_p(\alpha)| = |\Xi_p(\beta)|$ hold?

Evaluating the invariant $\Xi_p$ is therefore of interest both for computing signatures of singular branched covers and for its applications to knot concordance. Theorem~\ref{procedurethm}, stated in the next section, outlines a combinatorial procedure for computing this signature defect.

\section{Admissible singularities and the signature of a branched cover}
\label{admiss}

Let $f: Y\to X$ be a $p$-fold singular dihedral branched cover, with $X$ and $Y$ closed oriented four-manifolds. Denote by $B$ the (oriented) branching set of $f$  and by $\alpha$ a knot that arises as a singularity type on the embedding of $B$ in $X$.  As noted earlier, the signature defect  $\Xi_p(\alpha)$ is defined in this context. 
If, in addition, $X$ is the four-sphere, $\alpha$ is what we called a {$p$-admissible} knot.  Understanding admissible singularities is a necessary step for classifying dihedral covers between four-manifolds and computing their signatures, as well as for using the obstruction to being homotopically ribbon given in Corollary~\ref{ribbon-defect}. In this section, we give a necessary and sufficient condition for $p$-admissibility. This condition consists of three criteria; the first two are purely local and need to be satisfied by any singularity $\alpha$ on a $p$-fold dihedral cover between four-manifolds. The third criterion stems from the the additional assumption that the base be $S^4$, and analogous criteria can be defined for other manifolds. We conclude the section by describing a combinatorial procedure for computing $\Xi_p(\alpha)$ from a knot diagram.

\subsection{Three criteria for $p$-admissibility}\label{criteria}
Assume the notation of Definition~\ref{sing}. By our definition of a singular dihedral cover $f$, we have $f^{-1}(\partial D_z)$ connected. This gives the first criterion for $p$-admissibility of a knot $\alpha$: the sphere $S^3$ must  admit a $p$-fold irregular dihedral cover branched along $\alpha$. If $p$ is square-free, it suffices to check that $p$ divides the determinant of $\alpha$. For a general $p$, the existence of such a cover is equivalent to saying that the group $\pi_1(S^3-\alpha, x_0)$ surjects to the dihedral group $D_p$.  Fox's $p$-colorability is a combinatorial approach to detect the existence of such a surjection. In particular, two homomorphisms $\pi_1(S^3-\alpha, x_0)\to D_p$, or two Fox colorings, are called {\it equivalent} if they are related by an automorphism of $D_p$. Equivalent colorings induce homeomorphic dihedral covers. The existence of a dihedral branched cover of $\alpha$ can also be stated using the following notion.
 
\begin{defn}
\label{chark}
Let $\alpha\subset S^3$ be a knot and $V$ a Seifert surface for $\alpha$ with Seifert form $L$. Let $\beta\subset V^\circ$ be an embedded curve which represents a primitive class in $H_1(V; \mathbb{Z})$. If $L(\beta, \omega) + L(\omega, \beta) \equiv 0\mod p$ for all curves $\omega$ representing non-zero classes in  $H_1(V; \mathbb{Z})$, we say that $\beta$ is a {\it mod~p characteristic knot} for $\alpha$.
\end{defn}

The existence of  a $p$-fold irregular dihedral cover of $S^3$ branched along $\alpha$ is equivalent the existence of a {\it mod~p characteristic knot} for $\alpha$~\cite{CS1984linking}. Also see Section~\ref{dih-con}. Characteristic knots play a key role in computing the contribution $\Xi_p$ to the signature of a branched cover arising from the presence of a singularity. 

We have seen that a knot which arises as a singularity on a $p$-fold dihedral cover between four-manifolds must itself admit a $p$-fold irregular dihedral cover. The second criterion such a knot must satisfy has to do with the homeomorphism type of this cover. Given a $f$ as above and a singularity $z$ of type  $\alpha$ on the branching set of $f$, denote by $M$ the irregular dihedral $p$-fold cover of  $\alpha$ determined by $f$. As before, $D_z$ denotes a neighborhood of $z$ in $X$. By definition of a singular dihedral cover, $f^{-1}(D_z)\subset Y$ is the cone on $M$. Since $Y$ is a manifold, $M$ must be the three-sphere. 

It is a classical result that all two-bridge knots have this property any (odd) $p$. The proof of this fact amounts to computing the Euler characteristic of the lift to $M$ of a bridge sphere for $\alpha$. When $\alpha$ is two-bridge, this lift is seen to give a genus-0 Heegaard splitting of $M$. Thus, $p$-colorable two-bridge knots are for us a rich source of examples. Infinite families of three-bridge knots whose {three-fold} dihedral covers are $S^3$ have been identified -- see, for example~\cite{greene2011slice}. Determining the homeomorphism type of a dihedral cover of a knot, for example using~\cite{fox1970metacyclic}, becomes increasingly complicated as the bridge index of the knot and the degree of the cover grow. We did not find in the literature any general method for passing between a dihedral branched cover representation of a closed oriented three-manifold and a Heegaard diagram, so we devised a procedure to do this by hand -- see Section~\ref{sing-trisections}. Our immediate purpose was used to identify the families of dihedral covers constructed in the proof of Theorem~\ref{manyCP2s} via trisection diagrams; however, the same procedure can be applied to search for admissible singularities which are not two-bridge.

The third criterion for $p$-admissibility is not purely local but, rather, may depend on the base of the branched cover. When the base is $S^4$, this criterion captures the fact that  the $p$-fold irregular dihedral cover of $\alpha$ bounds a dihedral cover of the four-ball,   branched along properly embedded surface with boundary $\alpha$. 
Observe that this condition is satisfied by every $p$-colorable knot $\alpha$ if we allow singularly embedded surfaces in the four-ball with boundary $\alpha$, since the dihedral cover of a knot always extends over the cone on the knot.  Thus, we only consider locally flatly embedded surfaces in the four-ball, which correspond to covers of $S^4$ which have one singularity (each) on the branching sets. 
  
Let us return for a moment to the case of singular dihedral branched covers of an arbitrary four-manifold $X$. Denote by $\hat{X}$ the complement in $X$ of a neighborhood of the singularity. This last criterion can be cast in terms of the existence of a surface $F$ embedded in $\hat{X}$ so that the surjective homomorphism $\phi: \pi_1(S^3 - \alpha)\to D_p$ extends to a homomorphism $\phi: \pi_1(\hat{X} - F)\to D_p$.   If $\alpha$ is a slice knot, by Lemma~3.3 of~\cite{kjuchukova2016classification}, $F$ can always be chosen to be a slice disk contained in a four-ball properly embedded in $\hat{X}$. For a non-slice knot, the existence of a surface $F$ that admits such an extension may conceivably depend on the ambient manifold $X$. 

In the case where X is $S^4$, the second author and Kent Orr have found an obstruction to the existence of a surface $F$ as above and showed that the obstruction is sharp~\cite{kjorr2017admissible}. This allows for a complete classification of admissible two-bridge singularities over $S^4$ as the base and gives infinite families of $p$-admissible non-slice knots for all $p$.  In the current paper we use slice knots for our examples.

\subsection{The signature defect arising from a singularity}
In this section we give combinatorial procedure  for computing $\Xi_p(\alpha)$. This relies on the formula given in  Proposition~2.7 of~\cite{kjuchukova2016classification}, which we now recall. Let $\alpha$ be a $p$-admissible knot, $\beta$ the  characteristic knot corresponding to the relevant surjection  $\phi: \pi_1(S^3 - \alpha)\to D_p$, and $V$ the Seifert surface for $\alpha$ containing $\beta$ in its interior (see Definition~\ref{chark}). Furthermore, let $L_V$ denote the symmetrized Seifert form of $V$, $\zeta$ a primitive $p$-th root of unity, and $\sigma_{\zeta^i}$ the Tristram-Levine  $\sigma_{\zeta^i}$ signature. Then, 
\begin{equation}\label{eqXi}
\Xi_p(\alpha) = \frac{p^2-1}{6p}L_V(\beta, \beta)  + \sum_{i=1}^{p-1} \sigma_{\zeta^i}(\beta) + \sigma(W(\alpha, \beta))
\end{equation}

Here, $\sigma(W(\alpha, \beta))$ the denotes the signature of a four-manifold $W(\alpha, \beta)$ constructed by Cappell and Shaneson in~\cite{CS1984linking} and discussed in more detail in Section~\ref{octopus}. Remark that the first two terms in the above expression for $\Xi_p(\alpha)$ present no computational difficulty, while the calculation of $\sigma(W(\alpha, \beta))$ gets rather technical. Thus, we focus our attention on this term but postpone the definition of $W(\alpha, \beta)$. For the moment,  it suffices to know that  $\sigma(W(\alpha, \beta))$ can be computed in terms of linking numbers of certain curves  in the $p$-fold dihedral cover of $\alpha$. These curves are lifts to the dihedral cover of a basis $\mathcal{B}_V:=\{\beta_r,\beta_l, \omega_1, ..., \omega_{2g-2}\}$  for $H_1(V-\beta; \mathbb{Z})$, where $\beta_r$ and $\beta_l$ are push-offs of $\beta$, and the $\omega_i$ are curves in the interior or $V-\beta$. The relevant linking numbers are computed using the algorithm given in~\cite{cahnkjuchukova2016linking}. 

The purpose of the current section is to condense all this information in a labeled knot diagram of $\alpha$, $\beta$ and the $\omega_j$, so that the signature defect can be computed algorithmically. The resulting algorithm is the content of Theorem~\ref{procedurethm}.

The labeled link diagram we use is as follows.  One component of the link diagram is the knot $\alpha$. In order to simplify the combinatorics, we only include two of $\{\beta, \omega_1, ... \omega_{2g-2}\}$, or one of these curves together with its push-off in $V$, in our diagram at any given time.  Call these two curves $g$ and $h$.   Because $\beta$ is a mod 3 characteristic knot, any curve in $V-\beta$ lifts to three closed loops~\cite{CS1984linking}.  Thus for each pair of curves in $\mathcal{B}_V$, we compute nine linking numbers of their lifts, organized in a symmetric $3\times 3$ matrix.  The following set-up allows us to compute the intersection number of any lift of $h$ with a 2-chain whose boundary is any given lift of $g$.  For the details on how this 2-chain is constructed see \cite{cahnkjuchukova2016linking}.

{\it(1)} The arcs of $\alpha$ in the diagram $\alpha\cup g$ are labeled $0,1, \dots, m-1$, where $m$ is the number of self-crossings of $\alpha$ plus the number of crossings of $\alpha$ under $g$.  Each arc of $\alpha$ is colored 1,2 or 3, according to the given homomorphism $\rho:\pi_1(S^3-\alpha)\twoheadrightarrow D_3$.

{\it(2)} The arcs of $g$ in the diagram $\alpha\cup g $ are labeled $0,1,\dots, n-1$, where $n$ is the number of self-crossings of $g$ plus the number of crossings of $g$ under $\alpha$.

{\it(3)} Now we add $h$ to the above numbered diagram $\alpha\cup g$ without changing the numbering of any existing arcs.   The arcs of $h$ are labelled $0,1,\dots, o-1$, where $o$ is the number of crossings of $h$ under $\alpha$ plus the number of crossings of $h$ under $g$.  In this article, $h$ never has self-crossings. 

In addition, we need the following combinatorial information about this diagram:
	
The irregular dihedral cover $M_\alpha$ corresponding to $\rho$ is equipped with a cell structure coming from the cone on $\alpha$.  See~\cite{cahnkjuchukova2016linking} for details.  Let $\omega_i^j$ denote the lift of $\omega_i$ such that the lift of its zeroth arc lies in the $j^{th}$ 3-cell, for $j=1,2,3$.  The lifts $\beta_r^j$ and $\beta_l^j$ are defined analogously. 

An {\it anchor path} for a curve $\omega\subset V-\beta$ is a properly embedded path $\delta$ in $V-\beta$ from a point $q$ on the zeroth arc of $\alpha$ to a point $r$ on the zeroth arc of $\omega$.  Suppose $\delta$ crosses under a set $a_1,\dots a_k$ of arcs of $\alpha$ in that order, as one traverses $\delta$ from $q$ to $r$.  The {\it monodromy} of the anchor path $\delta$ is the product of the permutations $\sigma_k\dots\sigma_2\sigma_1$,  where $\rho(a_i)=\sigma_i$ is the permutation associated to the arc $a_i$ of $\alpha$.

\begin{thm}\label{procedurethm} 
 Let $\alpha$ be a knot and $\rho:\pi_1(S^3-\alpha) \twoheadrightarrow D_3$ a homomorphism to the symmetric group on three elements.  Let $\mathcal{B}_V=\{\omega_i\}_{i=1}^{2g-2}\cup \{\beta_r,\beta_l\}$ be any basis for $H_1(V-\beta;\mathbb{Z})$ consisting of embedded curves in a Seifert surface $V$ for $\alpha$, where $\beta$ is a mod 3 characteristic knot for $\alpha$.  Let $\delta_i$ be an anchor path for $\omega_i$, and let $\gamma_r$ and $\gamma_l$ be anchor paths for the right and left pushoffs of $\beta$ in $V$.   Let $\mu_{\delta_i}, \mu_{\gamma_r},$ and $\mu_{\gamma_l}\in D_3$ be their monodromies.  Let $c_0\in \{1,2,3\}$ be the color of the zeroth arc of $\alpha.$  Then the signature of the matrix of linking numbers of the following curves
	
	$$\omega_i^j-\omega_i^k, \text{ where } \{j,k\}=\{1,2,3\}-\{\mu_{\delta_i}(c_0)\}$$
	$$\beta^j-\beta^k, \text{ where } j=\mu_{\gamma_r}(c_0) \text{ and }\{k\}=\{1,2,3\}-\{\mu_{\gamma_r}(c_0),\mu_{\gamma_l}(c_0)\}$$
is independent of the choices of anchor paths $\delta_i$, $\gamma_r$, and $\gamma_l$, and is equal to $\sigma(W(\alpha,\beta))$. 
Setting this value equal to $\sigma (W(\alpha,\beta))$ in  Equation~\ref{eqXi} yields the value of $\Xi_3(\alpha).$
	\end{thm}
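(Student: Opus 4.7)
The plan is to verify that the algorithm packaged in the statement reproduces the linking-number presentation of $\sigma(W(\alpha,\beta))$ from~\cite{CS1984linking}, with each entry computed via the diagrammatic procedure of~\cite{cahnkjuchukova2016linking}. Three things need to be checked: that the listed cycles give a correct system of representatives for the cycles entering the Cappell--Shaneson intersection form; that their linking numbers are extractable from the labeled diagram described in items (1)--(3); and that the resulting matrix is independent of the choice of anchor paths.

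First I would recall the Cappell--Shaneson construction: $\sigma(W(\alpha,\beta))$ arises as the signature of a linking matrix whose rows and columns are indexed by cycles in $M_\alpha$ obtained from lifts of a chosen basis of $H_1(V-\beta;\mathbb{Z})$. Because $\beta$ is a mod~$3$ characteristic knot, each non-characteristic curve $\omega \subset V-\beta$ lifts to three disjoint loops $\omega^1, \omega^2, \omega^3$ in $M_\alpha$, labeled by the three $3$-cells of the cell structure on $M_\alpha$ induced by the cone on $\alpha$. The deck group $D_3$ acts on these lifts by its standard two-dimensional representation, so the null-homologous differences $\omega^j-\omega^k$ span the piece of $H_1(M_\alpha\setminus \widetilde{\beta};\mathbb{Q})$ that contributes to the intersection form on the cover. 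Selecting a single representative via $\{j,k\}=\{1,2,3\}\setminus\{\mu_{\delta_i}(c_0)\}$ produces one cycle per $\omega_i$; the analogous prescription applied to the pushoffs $\beta_r,\beta_l$ yields one further cycle, giving the $(2g-1)$ cycles needed to match the rank of the Cappell--Shaneson form.

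Next I would invoke the linking number algorithm of~\cite{cahnkjuchukova2016linking}, whose input is precisely the data specified in items (1)--(3): an arc numbering and Fox coloring of $\alpha$, and an arc numbering of each auxiliary curve. The zeroth arc of $\omega_i$, together with the monodromy $\mu_{\delta_i}$ of its anchor path, determines which $3$-cell of $M_\alpha$ contains the zeroth lifted arc of $\omega_i^j$, and so fixes the labeling of the three lifts unambiguously. Feeding the prescribed differences into the procedure of~\cite{cahnkjuchukova2016linking} then returns the entries of the signature matrix directly from the diagram, establishing that the stated signature equals $\sigma(W(\alpha,\beta))$ whenever the anchor path data is fixed.

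The main obstacle is to verify independence of the anchor paths. Changing $\delta_i$ by a loop in $V-\beta$ premultiplies $\mu_{\delta_i}$ by some $\tau\in D_3$ and simultaneously permutes the three lifts $\omega_i^1,\omega_i^2,\omega_i^3$ by the corresponding deck transformation. Because the complementary index set $\{1,2,3\}\setminus\{\mu_{\delta_i}(c_0)\}$ transforms covariantly and the full $3\times 3$ linking matrix of all nine lift-pairs is $D_3$-equivariant, the chosen difference cycle is replaced by its deck translate, so the signature of the assembled $(2g-1)\times(2g-1)$ matrix is unchanged. The same argument will handle $\gamma_r$ and $\gamma_l$, once one checks that the pair $(\mu_{\gamma_r},\mu_{\gamma_l})$ transforms as a unit so that both the distinguished lift $j=\mu_{\gamma_r}(c_0)$ and the complementary index rotate consistently. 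The delicate bookkeeping here---in particular confirming that the prescription for the $\beta$-cycle isolates the lift singled out by Cappell and Shaneson in the characteristic direction---is the hardest step, and will require tracing the $D_3$-representation carefully through the cell decomposition of $V$ and its lift to $M_\alpha$.
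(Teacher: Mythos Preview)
Your proposal has a genuine gap at the heart of the argument. The paper's proof hinges on a specific geometric identification: $\sigma(W(\alpha,\beta))$ is the signature of the linking form on $\ker i_*$, where $i_*:H_1(S;\mathbb{Z})\to H_1(J;\mathbb{Z})$ and $S=\bar h(\partial J)\subset M_\alpha$ is the surface built by gluing the three lifts $V_0,V_1,V_2$ of $V-\beta$ along lifts of $\beta$ and along the index-$2$ branch curve $\alpha_1$. From that picture one reads off (via Corollary~2.4 of~\cite{kjuchukova2016classification}) that $\ker i_*$ is spanned precisely by the differences $\omega_{1,i}-\omega_{2,i}$ and $\beta_{0,r}-\beta_{2,r}$: one must \emph{exclude} the lift lying on $V_0$, the sheet bordering the index-$1$ branch curve $\alpha_0$. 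The role of the anchor path is then purely to translate between labelings: the lift of $\delta_i$ starting on $\alpha_0$ lands on $\omega_{0,i}\subset V_0$, so $\mu_{\delta_i}(c_0)$ names which of the cell-structure labels $j\in\{1,2,3\}$ corresponds to the $V_0$-lift, and the prescription $\{j,k\}=\{1,2,3\}\setminus\{\mu_{\delta_i}(c_0)\}$ selects exactly the element of $\ker i_*$. Independence of the anchor path then follows for free, since $\sigma(W(\alpha,\beta))$ is intrinsically defined and the anchor path is only a device for reading off which combinatorial label is the $V_0$-label.

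You instead frame the prescription as ``selecting a single representative'' from a $D_3$-equivariant family of differences and try to deduce both correctness and anchor-path independence from that equivariance. This does not work as stated. The irregular dihedral cover $M_\alpha$ carries no $D_3$ deck action (it corresponds to a non-normal $\mathbb{Z}/2$ subgroup), and the three lifts $\omega_{0,i},\omega_{1,i},\omega_{2,i}$ are not on equal footing inside $S$: only two of them border the index-$2$ curve $\alpha_1$, and only the difference of those two lies in $\ker i_*$. So an arbitrary difference $\omega_i^j-\omega_i^k$ need not compute the correct contribution, and your equivariance heuristic gives no reason why the particular index $\mu_{\delta_i}(c_0)$ should be the one to omit. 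The missing ingredient is exactly the identification of $\ker i_*$ via the Cappell--Shaneson surface $S$ and the observation that the anchor-path lift to $V_0$ singles out the excluded sheet; without it, the combinatorial recipe is not connected to $\sigma(W(\alpha,\beta))$.
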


In Section~\ref{examples}, we illustrate how to apply this theorem to compute the signature defect associated to a singularity. The first knot we use as an example is $6_1$, one of the singularities in the family constructed in the proof of Theorem~\ref{manyCP2s}. Our second example is a 3-admissible knot whose Seifert surface has higher genus, in order that the additional curves  $\omega_i$ and their anchor paths come into play.  In addition to using the above procedure to evaluate the signature of the cover, in Section~\ref{trilift} we outline a method which allows us to identify the covering manifold using trisections. This involves adapting a result on trisecting smooth surfaces in four-manifolds~\cite{meierzupan} to include the case of  surfaces with isolated cone singularities, and then lifting a trisection of the base manifold to a trisection of its dihedral cover. This process is explained in the next section.

	\section{Colored Singular Bridge Trisections and their Coverings}\label{trilift}

In this section we give a method to identify, via trisections, the manifold obtained by taking a singular dihedral branched cover in terms of data about the base and branching set.
First, we explain how to modify tri-plane diagrams of smoothly embedded surfaces in $S^4$~\cite{meierzupan} so as to trisect singular knotted surfaces in $S^4$.  Then we show that given a representation $\pi_1(S^4-B)\twoheadrightarrow D_p$, the trisection of the singular knotted branching surface $B$ lifts to a trisection of the covering 4-manifold.  We use this setup to construct an explicit infinte family of covers of $S^4$, branched along singular two-spheres with distinct isolated cone singularities, and show each is diffeomorphic to $\mathbb{C}P^2$.  A slight modification of this construction yields an infinite family of covers $S^4\rightarrow S^4$ whose singularities are the cone on a link rather than a knot.

\subsection{Bridge trisections of singular surfaces in $S^4$}  
\label{sing-trisections}

First we recall the definition of a trisection of a closed, connected, oriented four-manifold $Y$ due to Gay and Kirby~\cite{gaykirby2016trisections}.  Given $0\leq k_1,k_2,k_3\leq g$, a $(g;k_1,k_2,k_3)$-trisection of $Y$ is a decomposition of $Y$ into three submanifolds $Y_i$, $Y=Y_1\cup Y_2\cup Y_3$, such that

\begin{enumerate}
	\item $Y_i$ is diffeomorphic to $\natural^{k_i}(S^1\times B^3)$, and in particular is diffeomorphic to $B^4$ if $k_i=0$
	\item $Y_1\cap Y_2 \cap Y_3$ is a closed, orientable surface $F_g$ of genus $g$
	\item $Y_i\cap Y_j$ is a genus $g$ three-dimensional handlebody
	\end{enumerate}
	
The {\it spine} of a trisection is the union of the pairwise intersections $Y_i\cap Y_j$, and completely determines the trisection.  The spine is in turn determined by a Heegaard triple $(F_g,\mu_1,\mu_2,\mu_3)$ where the $\mu_i$ are $g$-tuples of curves on $F_g$.  Each pair of tuples $(F_g, \mu_i,\mu_{i+1})$ is a Heegaard diagram for the Heegaard splitting $\partial Y_i=(Y_i\cap Y_{i+1})^+\cup_{F_g}(Y_i\cap Y_{i-1})^-$ where indices are taken mod 3.
	
 Let $S^4=X_1\cup X_2 \cup X_3$ be the standard genus 0 trisection of $S^4$ given in~\cite{gaykirby2016trisections}.  This means that each $X_i$ is diffeomorphic to $B^4$, $X_i\cap X_j=\partial X_i \cap \partial X_j$ is homeomorphic to $B^3$, and $X_1\cap X_2\cap X_3 $ is homeomorphic to $S^2$.  

Now let $B$ be a non-singular knotted surface in $S^4$.  The notion of a trisection of $B$, introduced in~\cite{meierzupan}, can be viewed as a four-dimensional analogue of a bridge splitting of a knot in $S^3$.  A {\it trivial $c$-disk system} is a pair $(B^4,\mathcal{D})$ of $c$ properly embedded disks in $B^4$ which are simultaneously isotopic into the boundary of $B^4$.   A $(b;c_1,c_2,c_3)$-bridge trisection of $B$ is a decomposition of the pair $(S^4, B)=(X_1,\mathcal{D}_1)\cup (X_2,\mathcal{D}_2)\cup (X_3, \mathcal{D}_3)$ such that 

\begin{enumerate}
\item $S^4=X_1\cup X_2\cup X_3$ is the 0-trisection	of $S^4$
\item $(X_i,\mathcal{D}_i)$ is a trivial $c_i$-disk system
\item $(X_i,\mathcal{D}_i)\cap (X_j,\mathcal{D}_j)$ is a $b$-strand trivial tangle.
\end{enumerate}

\begin{defn}  An {\it $b$-strand trivial tangle} is a collection of arcs $\{t_1,\dots,t_b\}$, properly embedded in $B^3$, such that for each $i\in \{1,\dots,b\}$, there exists a path $d_i:[0,1]\rightarrow S^2=\partial B^3$ such that $d_i(0)=t_i(0)$, $d_i(1)=t_i(1)$, and their concatenation $d_i\cdot t_i$ is the boundary of a disk $\partial \Delta^2_i$, where the $\Delta^2_i$ are a collection of disjoint disks in $B^3$.  The disks $\Delta^2_i$ are called {\it bridge disks} and each $d_i$ is called the {\it shadow} of the arc $t_i$.
	\end{defn}

Given any bridge trisection of a non-singular knotted surface, the boundary of each disk system $\partial\mathcal{D}_i$ is a $c_i$-component unlink in $\partial X_i$.  On can represent a bridge trisection of $B$ combinatorially via a {\it tri-plane diagram} \cite{meierzupan}.  This is a triple of trivial tangle diagrams $A$, $B$, and $C$, such that $L_1=A\cup \bar{B}$, $L_2=B\cup \bar{C}$, and $L_3=C\cup \bar{A}$ are link diagrams for  $\partial \mathcal{D}_1$, $\partial\mathcal{D}_2$, and $\partial\mathcal{D}_3$, respectively, where the bar denotes the mirror image, and $L_i$ is a $c_i$-component unlink.

\subsection{Singular tri-plane diagrams and their colorings} A tri-plane diagram determines a bridge-trisected, knotted, non-singular embedded surface in $S^4$. Each unlink $L_i$ bounds a family of $c_i$ disks in $\partial X_i$.  Pushing these disks into the four-balls $X_i$ and taking their boundary union yields a $(b;c_1,c_2,c_3)$-trisected surface. We now extend the collection of tri-plane diagrams to allow for surfaces with singularities.  A {\it singular tri-plane diagram} is a triple of trivial tangle diagrams $(A,B,C)$, such that $L_1=A\cup \bar{B}$, $L_2= B\cup \bar{C}$, and $L_3=C\cup \bar{A}$ are $c_i$-component link diagrams, not necessarily of the unlink.

To construct a singular surface in $S^4$ from a singular tri-plane diagram, we again consider the links $L_1=A\cup \bar{B}$, $L_2=B\cup \bar{C}$, and $L_3= C\cup \bar{A}$.  If $L_i$ is an unlink, it bounds a collection of disks in the $\partial X_i$ as before.  Let $\mathcal{D}_i$ denote the push-off of these disks into $X_i$.  If $L_i$ is not an unlink, we take $\mathcal{D}_i$ to be the cone on $L_i$.  When the branching set has a single isolated singularity modelled on the cone on a knot $\alpha$, one of the $L_i$ will be isotopic to $\alpha$; the other two $L_i$ will be  $c_i$-component unlinks.  When each tangle has $b$ strands, call the corresponding trisection a $(b; 1; c_2, c_3)$ {\it singular bridge trisection}.  Alternatively, if one of the $L_i$ is a split link, one might want to define $\mathcal{D}_i$ to be the disjoint union of the cones on each of its components.  This would allow for multiple isolated singularities, each modelled on the cone on a knot, but we do not use that construction in this paper. 
 
\begin{figure}[h]\includegraphics[width=4in]{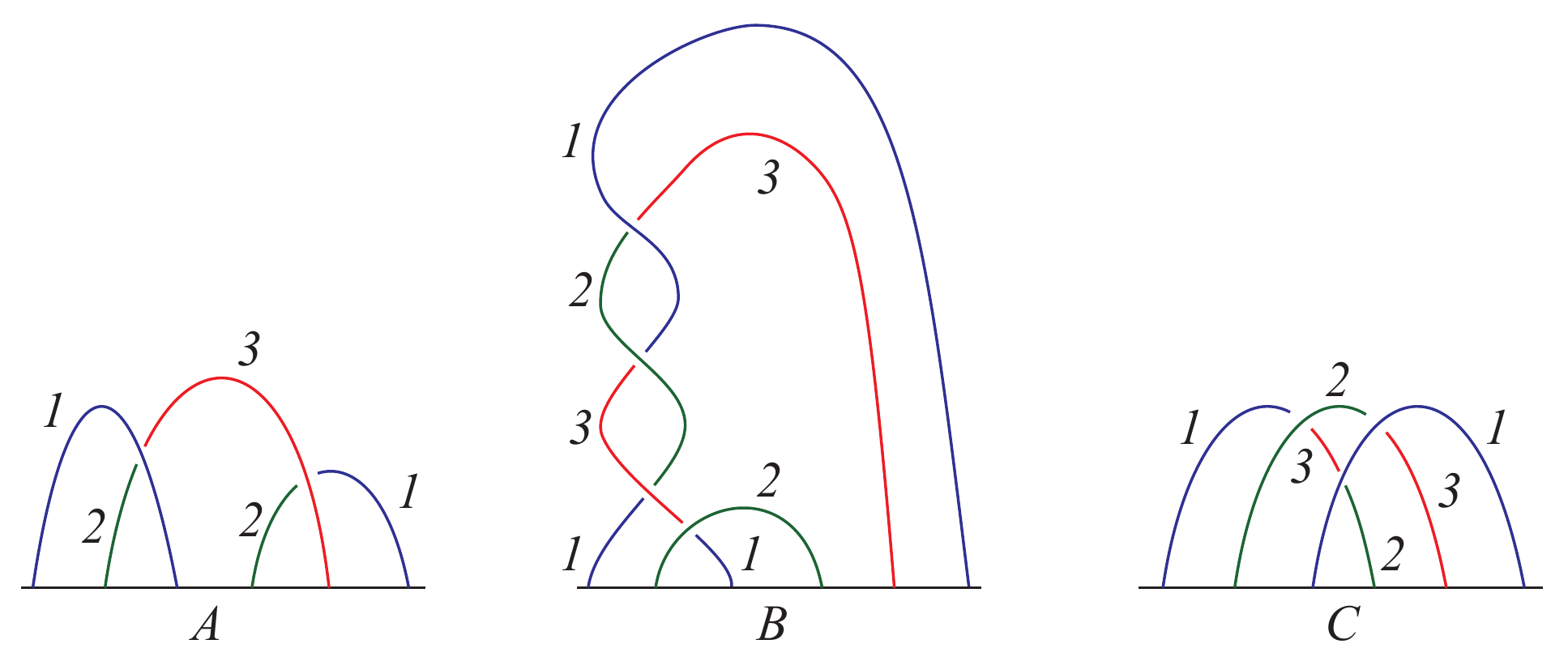}
	\caption{A tri-plane diagram for a $(3; 1, 2,2)$-bridge trisection of a two-sphere with one singularity of type $6_1$.}
	\label{triplanediagram61.fig}
	\end{figure}

A tri-plane diagram for a singular $(3;1,2,2)$-bridge trisection of a two-sphere in $S^4$ with a single cone singularity of type $6_1$ is pictured in Figure \ref{triplanediagram61.fig}.  Note that $A\cup \bar{B}$ is the knot $6_1$, while $B\cup \bar{C}$ and $C\cup \bar{A}$ are two-component unlinks.

A $p$-colored tri-plane diagram $(A,B,C)$ is a choice of $p$-colorings of the tangles $A$, $B$ and $C$ which induces a valid Fox $p$-coloring of  each of the link diagrams $A\cup \bar{B}$, $B\cup \bar{C}$, and $C\cup \bar{A}$. We require that least one of the induced Fox colorings of the link diagrams be non-trivial. For $p$ prime, it suffices to require that at least two colors are used. Note that a $p$-coloring on a tri-plane diagram determines a surjection to $D_p$ from the group of each of the knots among  $A\cup \bar{B}$, $B\cup \bar{C}$, and $C\cup \bar{A}$ for which the induced Fox coloring is non-trivial. 

Given a $p$-colorable knot $\alpha$, we define its {\it $p$-dihedral bridge number}, $b_p(\alpha)$, to be the minimum $b$ such that $\alpha$ arises as the boundary of the singular disk in a $p$-colored tri-plane diagram for a $(b;1,c_2,c_3)$-bridge trisection.

\begin{lem} Let  $(A_1, A_2, A_3)$  be a $p$-colored tri-plane diagram for a surface $F$ in $S^4$. This coloring determines a surjective homomorphism $\pi_1(S^4-F)\to D_p$ which extends the homomorphism $\pi_1(S^3-A_i\cup \bar{A_j})\to D_p$ given by the induced Fox coloring on the pair $A_i\cup \bar{A_j}$ for each $i\neq j$ in $\{1, 2, 3\}$.	\end{lem}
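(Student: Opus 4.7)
The plan is to build the surjection $\rho:\pi_1(S^4-F)\to D_p$ by applying van Kampen's theorem to the bridge-trisection decomposition of $(S^4,F)$ and then to read off from the construction that $\rho$ restricts to each induced Fox-coloring homomorphism. The open cover I would use is $S^4-F=\bigcup_{i=1}^{3}(X_i-\mathcal{D}_i)$; the pairwise intersections $(X_i\cap X_j)-F$ are $3$-balls meeting $F$ in the trivial tangles represented by the diagrams $A_k$, and the triple intersection is the bridge sphere punctured at the $2b$ bridge points, which is path-connected.

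First, on each tangle complement $\pi_1((X_i\cap X_j)-F)$ the Wirtinger presentation on the arcs of $A_k$ has the self-crossings of $A_k$ as its only relations, and the hypothesis that the coloring is a tangle coloring means these relations are satisfied after assigning each arc its reflection in $D_p$. On each sector $\pi_1(X_i-\mathcal{D}_i)$, radial deformation retraction---away from the cone point of $\mathcal{D}_i$ in the singular case, and along the product structure $B^4=D^2\times D^2$ in the smooth case---identifies $X_i-\mathcal{D}_i$ with $\partial X_i-L_i$, so the hypothesized valid Fox coloring of $L_i=A_j\cup\bar A_k$ yields a homomorphism $\rho_i:\pi_1(X_i-\mathcal{D}_i)\to D_p$. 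Compatibility of these pieces on pairwise intersections is automatic, since each Wirtinger meridian of a tangle arc is also a Wirtinger meridian of both adjacent link diagrams, assigned the same reflection; compatibility on the triple intersection, whose $\pi_1$ is generated by small loops around the bridge points, reduces to the fact that the three tangle colorings agree at every shared bridge point, which is part of the definition of a colored tri-plane diagram. Van Kampen then assembles these pieces into $\rho:\pi_1(S^4-F)\to D_p$.

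For the extension claim, each link $A_i\cup\bar A_j$ is one of the $L_k$'s and sits in $\partial X_k$, so the inclusion $S^3-L_k\hookrightarrow S^4-F$ factors through $X_k-\mathcal{D}_k$, and the restriction of $\rho$ along it equals $\rho_k$, which by construction is the Fox-coloring homomorphism on $L_k$. Surjectivity of $\rho$ follows from the hypothesis that at least one induced link-coloring is non-trivial, since by the paper's standing convention such a coloring surjects onto $D_p$. The step demanding the most care will be the sector analysis of $\pi_1(X_i-\mathcal{D}_i)$: specifically, verifying cleanly that the deformation retraction onto $\partial X_i-L_i$ works in both the smooth (trivial disk system) and singular (cone on a knot) cases and induces the expected isomorphism on $\pi_1$, and that the resulting Fox-coloring homomorphism glues coherently with the neighboring tangle-complement presentations at the bridge points.
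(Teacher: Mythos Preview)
Your proposal is correct and follows essentially the same approach as the paper: apply van Kampen's theorem to the trisection decomposition, use that the inclusion $\partial X_i - L_i \hookrightarrow X_i - \mathcal{D}_i$ is $\pi_1$-surjective (because each $\mathcal{D}_i$ is a union of trivial disks or a cone), and invoke the definition of a $p$-colored tri-plane diagram for compatibility. The paper's proof phrases the sector step as a $\pi_1$-surjection rather than a deformation retraction, which is all that is actually needed and sidesteps the care you flagged in the smooth case.
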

	\begin{proof} The intersection $\mathcal{D}_{ij}$ of $F$ with  each of the three four-balls $X_i$ that make up the trisection is a union of trivial two-disks and cones. Therefore, $\pi_1(S^3-A_i\cup \bar{A_j})$ surjects onto $\pi_1(B^4-\mathcal{D}_{ij})$.	The statement then follows from Van Kampen's Theorem, together with the definition of $p$-coloring.	\end{proof}

\subsection{Lifting a colored singular bridge trisection to a four-manifold dihedral cover} Given a homomorphism $\rho:\pi_1(S^4-B)\twoheadrightarrow D_{p}$, a bridge trisection of the branching set $B\subset S^4$ lifts to a trisection of the $p$-fold irregular dihedral cover $Y$ of $S^4$ branched along $B$.

\begin{figure}[htbp]\includegraphics[width=4in]{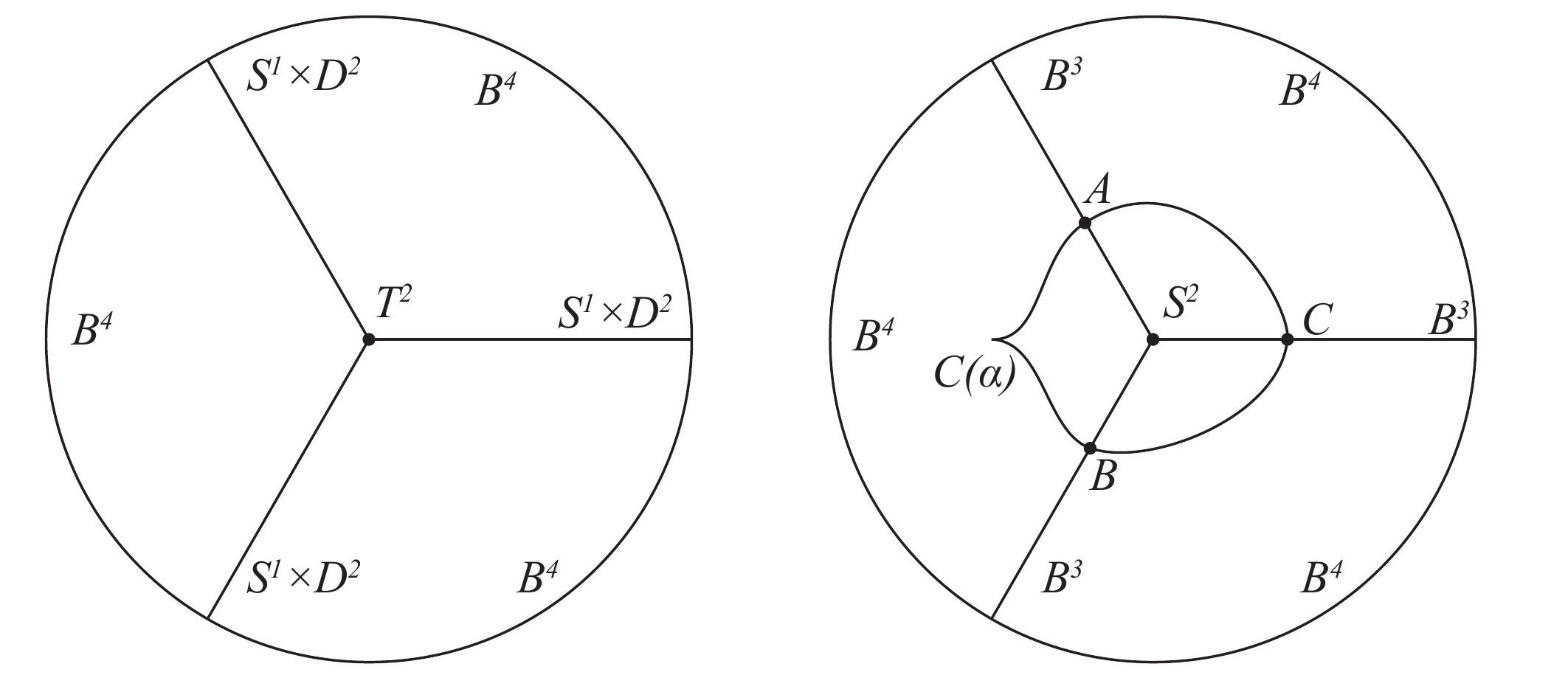}
	\caption{A trisection of the 4-manifold $Y$ (left) covering the bridge trisection of the singular knotted surface with one singularity of type $\alpha=6_1$ (right).}
	\label{trisections.fig}
	\end{figure}

\begin{thm} The $p$-fold irregular dihedral cover of $S^4$ branched along a surface $B
\subset S^4$ with one 
singularity, presented as a singular $(b; 1, c_2, c_3)$-bridge trisection, is naturally equipped with a $(g;k_1,k_2,k_3)$-trisection, where $g=1-p(1-b)-b(1+\frac{p-1}{2})$ , $k_1=0$, and for $i=2,3$, $k_i=1-p(1-c_i)-c_i-c_i(\frac{p-1}{2})$. \label{trisectionlift.thm}
\end{thm}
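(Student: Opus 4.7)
My plan is to define $Y_i := f^{-1}(X_i)$ for the three pieces $(X_i, \mathcal{D}_i)$ of the singular bridge trisection of $(S^4, B)$, and to verify the three Gay--Kirby trisection axioms directly. All computations will rely on the standard Euler characteristic formula for an irregular dihedral $p$-fold branched cover $\tilde X \to X$ along a submanifold $B$: the preimage of each branch point consists of $(p+1)/2$ points, coming from the action of a reflection in $D_p$ on the coset space $D_p/(\mathbb{Z}/2\mathbb{Z})$ (one fixed coset and $(p-1)/2$ transpositions, since $p$ is odd), so $\chi(\tilde X) = p\,\chi(X - B) + \tfrac{p+1}{2}\,\chi(B)$.

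The singular piece is handled by the admissibility hypothesis: $(X_1, \mathcal{D}_1) = (\mathrm{cone}(S^3), \mathrm{cone}(\alpha))$, and by the second criterion for $p$-admissibility (Section~\ref{criteria}) the $p$-fold dihedral cover of $(S^3,\alpha)$ is $S^3$, so $Y_1 = \mathrm{cone}(S^3) = B^4$ and $k_1 = 0$. For $i = 2, 3$, the trivial $c_i$-disk system factors as $(B^3, T_i) \times I$ for a standard trivial $c_i$-strand tangle $T_i$, which in turn factors as $(D^2, c_i\text{ pts}) \times I$. Thus $Y_i \cong \Sigma_i \times I \times I$, where $\Sigma_i$ is the $p$-fold dihedral branched cover of $(D^2, c_i\text{ pts})$. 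A parallel analysis identifies each pairwise intersection $Y_i \cap Y_j$ as the cover of $(B^3, b\text{-strand trivial tangle}) = (D^2, b\text{ pts}) \times I$, and the central surface $F := f^{-1}(X_1 \cap X_2 \cap X_3)$ as the $p$-fold dihedral branched cover of $(S^2, 2b\text{ pts})$.

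The key structural claim is that each cover $\Sigma$ of $(D^2, c\text{ pts})$ is planar. This holds because $\pi_1(D^2 - \{c\text{ pts}\}) \cong F_c$ is a free group, so every finite cover has free fundamental group and is homotopy equivalent to a wedge of circles; filling in the finitely many branch points yields a compact surface with free $\pi_1$, hence planar. Granting planarity, each $\Sigma \times I$ is a $3$-dimensional handlebody of genus $\beta - 1$, where $\beta$ is the number of boundary components of $\Sigma$, determined by the Euler characteristic $\chi(\Sigma) = p - (p-1)c/2$. Applying this with $c = b$ shows each $Y_i \cap Y_j$ is a handlebody of genus $g = 1 - p + (p-1)b/2$; applying it with $c = c_i$ shows each $Y_i \cong (\Sigma_i \times I) \times I \cong \natural^{k_i}(S^1 \times B^3)$ with $k_i = 1 - p + (p-1)c_i/2$; and Riemann--Hurwitz applied to $F$ yields a closed surface of the same genus $g$. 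Connectivity of all these pieces follows from the surjectivity of the coloring $\pi_1(S^4 - B) \twoheadrightarrow D_p$ together with connectivity of $Y$. The main obstacle I anticipate is establishing the planarity step above: once it is in hand, the rest is Euler characteristic bookkeeping paralleling the non-singular Meier--Zupan setting.
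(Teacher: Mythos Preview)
Your overall strategy matches the paper's: set $Y_i = f^{-1}(X_i)$, exploit the product structure $(B^4,\mathcal{D}_i)\cong (D^2,\{c_i\text{ pts}\})\times I^2$ on the non-singular pieces, use admissibility to identify $Y_1$ with $B^4$, and reduce everything to Euler characteristic computations for branched covers of the disk and sphere. The formulas you obtain for $g$ and $k_i$ agree with the paper's.

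The genuine gap is your planarity argument, which you rightly flagged as the main obstacle. The implication ``$\pi_1(\Sigma)$ free $\Rightarrow$ $\Sigma$ planar'' is false for surfaces with boundary: \emph{every} compact surface with nonempty boundary has free fundamental group, including, for instance, a once-punctured torus. Worse, planarity is actually false in general here. Take $p=3$, $c=4$, with the four meridians sent to $(12),(13),(12),(13)$. Then the boundary loop maps to $[(12)(13)]^2=(123)$, which acts as a $3$-cycle on the cosets of $\langle(23)\rangle$, so $\partial\Sigma$ is connected; since $\chi(\Sigma)=3(1-4)+4\cdot 2=-1$, this forces $g_\Sigma=1$.

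The good news is that planarity is unnecessary, and the paper does not invoke it. What the paper uses instead is the elementary fact that for \emph{any} compact connected surface $\Sigma$ with nonempty boundary, $\Sigma\times I$ is a handlebody of genus $2g_\Sigma+b_\Sigma-1=1-\chi(\Sigma)$ (its boundary is the double of $\Sigma$, a closed surface of that genus). Replacing your ``$\beta-1$'' by ``$1-\chi(\Sigma)$'' throughout, your argument goes through unchanged; indeed it is slightly more explicit than the paper's, since you also check directly that each pairwise intersection $Y_i\cap Y_j\cong \Sigma'\times I$ is a handlebody of the same genus as the central surface $F$.
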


\begin{proof}  Let $\rho:\pi_1(S^4-B)\twoheadrightarrow D_{p}$ be the homomorphism which determines the covering manifold $Y$.
	
	First we compute $g$, the genus of the central surface $F$ of the trisection.  $F$ is a $p$-fold irregular dihedral cover of $S^2$ branched along $2b$ points $\{p_1,\dots,p_{2b}\}$.  Each branch point has one index 1 preimage and $\frac{p-1}{2}$ index 2 preimages. Hence $$\chi(F)=p\chi(S^2-\{p_1,\dots,p_{2b}\})+2b\left(1+\frac{p-1}{2}\right),$$ and the formula for $g$ follows.
	
The 4-manifold $Y_1$ which lies over $X_1=B^4$, where $B\cap X_1$ is $C(\alpha)$, is the cone on the irregular $p$-fold dihedral cover of $S^3$ branched along $\alpha$.  Since $\alpha$ is assumed to be admissible $Y_1\cong B_4$ as well, so $k_1=0$.

Last, we compute $k_i$ for $i=2,3$.  To do this we find the irregular $p$-fold cover of $B^4$ branched along the collection $\mathcal{D}_i$ of $c_i$ properly embedded disks, simultaneously isotopic into $\partial B^4$. A homomorphism $\rho:\pi_1(S^4-B)\twoheadrightarrow D_{p}$ restricts to a homomorphism $\bar{\rho}:\pi_1(B^4-\mathcal{D}_i)\twoheadrightarrow D_{p}$.  Because $\mathcal{D}_i$ is a trivial disk system, we have a homeomorphism of pairs
 $$\phi:(B^4, \mathcal{D}_i)\rightarrow (B^2,\{q_1,\dots,q_{c_i}\})\times I^2,$$
 where $I=[0,1]$.  Let $\iota: (B^2,\{q_1,\dots,q_{c_i}\})\hookrightarrow (B^2,\{q_1,\dots,q_{c_i}\})\times I^2$ denote the inclusion.  The pair $(B^2,\{q_1,\dots,q_{c_i}\})\times I^2$ deformation retracts to $(B^2,\{q_1,\dots,q_{c_i}\})$, so $\iota$ induces an isomorphism on fundamental groups.  Now we have an induced homomorphism $\bar{\rho}\circ \iota_*:\pi_1(B^2-\{q_1,\dots,q_{c_i}\})\twoheadrightarrow D_{p}$.

Let $\Sigma$ denote the irregular $p$-fold dihedral cover of $B^2$ branched along $\{q_1,\dots,q_{c_i}\}$, corresponding to the homomorphism above.  First note that the Euler characteristic of $\Sigma$ is given by
$$\chi(\Sigma)=p(1-c_i)+c_i+c_i\left(\frac{p-1}{2}\right)=2-2g_\Sigma-b_\Sigma,$$
where $g_\Sigma$ and $b_\Sigma$ are the genus and number of boundary components of $\Sigma$ respectively.  Note that $g_\Sigma$ and $b_\Sigma$ depend on the homomorphism $\bar{\rho}\circ \iota_*$, not just on $p$ and $c_i$.

  The product $\Sigma\times I$ is a handlebody, whose boundary is the double of $\Sigma$.  Hence $\Sigma\times I$ homeomorphic to a boundary connected sum of $2g\Sigma+b_\Sigma-1$ solid tori.  The cover of $B^4$ branched along $\mathcal{D}_i$ is homeomorphic to $\Sigma\times I^2$, which is a boundary connect sum of $2g\Sigma+b_\Sigma-1$ copies of $S^1\times B^3$.  Hence $k_i=2g_\Sigma+b_\Sigma-1$.  Using the above formula for $\chi(\Sigma)$, it follows that 
  $$k_i=1-p(1-c_i)-c_i-c_i\left(\frac{p-1}{2}\right).$$
\end{proof}

\section{Dihedral covers $\mathbb{CP}^2\rightarrow S^4$ and other infinite families} 

We give a couple of ways to approach the proof of our Theorem~\ref{manyCP2s}. To determine the homeomorphism types of the covers constructed, one could go quickly using a big hammer~\cite{freedman1982topology}. By our Theorem~\ref{genus-one}, the diffeomorphism types can be determined with the help of the classification of genus one trisections (this bypasses Freedman). 
However, with an eye toward more general constructions and classification results, we complement these very brief arguments by a hands-on procedure to write down trisection diagrams for the covers constructed. This procedure can be used to study more complicated singularities -- and covering manifolds whose intersection forms are more complicated -- where the above considerations no longer suffice.  For instance, we apply the trisection method to show that, by using the family of singularities $\alpha_{6i+3}$, rather than the family $\alpha_{6i}$ used in the Proof of Theorem~\ref{manyCP2s}, we can construct infinitely many 3-fold dihedral covers $f: S^4\to S^4$ branched along {\it non-embedded} two-spheres, each with a link singularity. See Example~\ref{s4tos4}.

\begin{proof}[Proof of Theorem~\ref{manyCP2s}]
Let $f: Y\to S^4$ be as given. By considering the degree of the restriction of $f$ to its unbranched and branched components, we find that \begin{equation}\label{eulerchar.eqn}\chi(Y)=p\chi(S^4)-\dfrac{p-1}{2}\chi(B)-\dfrac{p-1}{2}m.\end{equation}
	
	Now suppose that $Y$ has the homotopy type of $\mathbb{C}P^2$. 
	Plugging into the above equation and simplifying yields 
	 $\frac{p-1}{2}(2+2g-m)=1,$ with $\frac{p-1}{2}$ a positive integer.  Hence, $p=3$ and $g=\dfrac{m-1}{2}$.
	 
	When $m=1$, an explicit family $f_i$ of singularities for such covers can be constructed using the knots $\alpha_k$ in Figure \ref{singularityfamily.fig} as singularities, by setting $k=6i$, $i\in \mathbb{N}$.  The knot $\alpha_{6i}$ is two-bridge, three-colorable, and, by~\cite{lamm2000symmetric}, ribbon.   
	 
	 \begin{figure}[h]\includegraphics[width=4in]{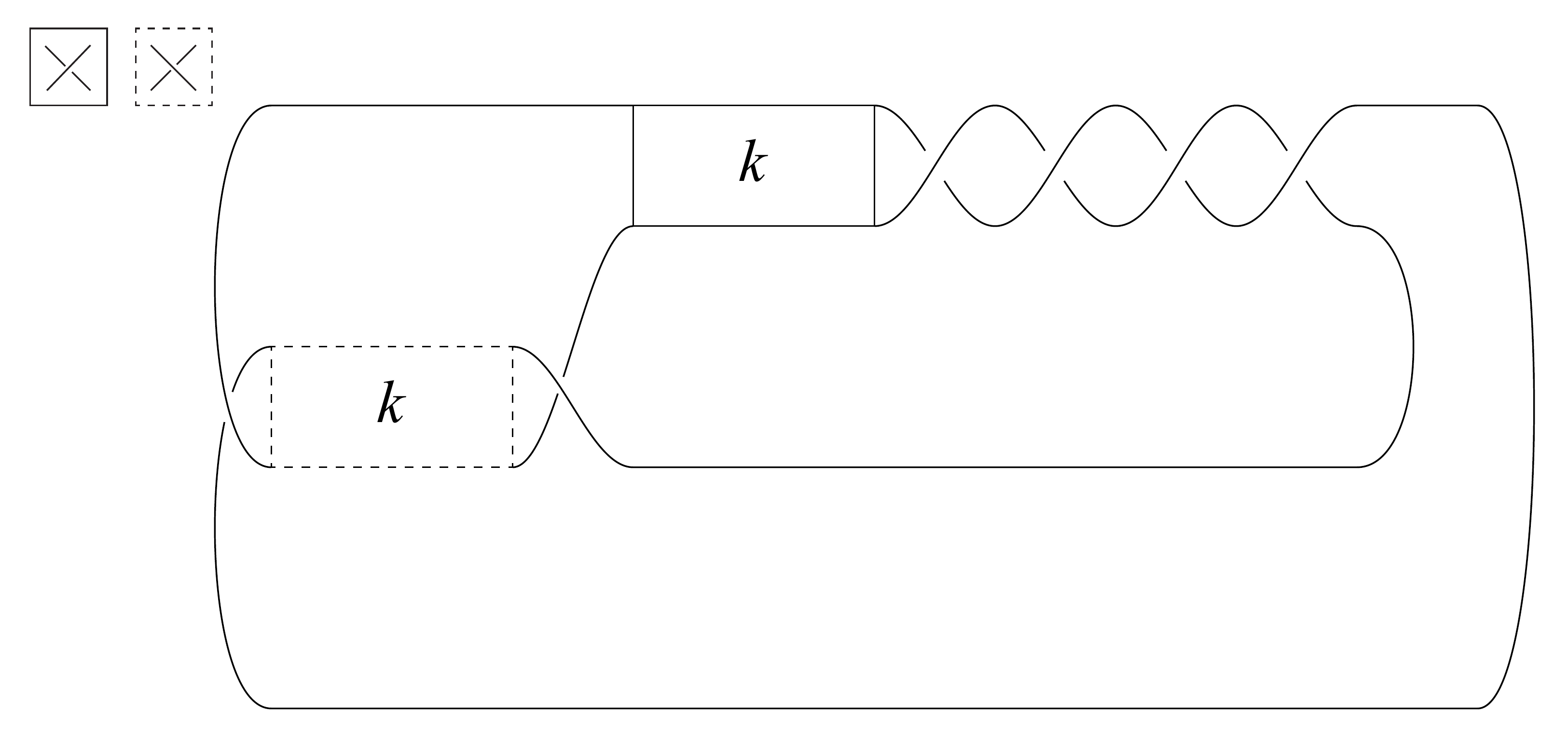}
		 \caption{A family of links $\alpha_k$.  In each of the dotted and solid boxes, there are $k$ copies of the crossings in the corresponding boxes.  $\alpha_k$ has 2 components if $k$ is odd and 1 component otherwise.}
		 \label{singularityfamily.fig}
		 \end{figure}
	
	The covering maps $f_i$ are constructed as follows. Let $\Delta_k$ be a smoothly slice disk for $\alpha_k$, obtained from a ribbon disk pushed into $B^4=\partial S^3$. By Lemma~3.3 of~\cite{kjuchukova2016classification}, the homomorphism $\pi_1(S^3-\alpha)\twoheadrightarrow D_3$ extends to a homomorphism $\pi_1(B^4-\Delta_k)\twoheadrightarrow D_3$, and the corresponding cover is simply connected.  Moreover, since the branching set is the boundary union of the cone on $\alpha_k$ and $\Delta_k$, the Euler characteristic of $Y$ is 3.   
	Since $Y$ is a simply-connected closed oriented four-manifold, it follows that the rank of $H_2(Y;\mathbb{Z})$ is 1.  Hence, $\sigma(Y)=\pm 1$. $Y$ is also smooth, since  $\Delta_k$ is a smoothly embedded disk.  	$Y$ is homeomorphic to $\mathbb{CP}^2$ by~\cite{freedman1982topology}, and diffeomorphic by Theorem~\ref{genus-one}. 
	
\end{proof}

\begin{proof}[Proof of Theorem~\ref{homoCP2}] We apply the argument in the proof of Theorem~\ref{manyCP2s}, with $\alpha_k$ replaced by any 3-admissible homotopically ribbon singularity $\alpha$ and $\Delta_k$ replaced by a homotopically ribbon disk $D$ for $\alpha$. 
We conclude that if $Y$ is a 3-fold dihedral cover of $S^4$, with branching set a boundary union of $D$ and the cone on $\alpha$, then $Y$ is a simply connected four-manifold with $\chi(Y)=3$. Again, it follows that the rank of $H_2(Y;\mathbb{Z})$ is 1 and, by~\cite{freedman1982topology}, $Y$ has the homotopy type of $\mathbb{CP}^2$. If $D$ is also smooth, $Y$ is homeomorphic to $\mathbb{CP}^2$.  
 \end{proof}

\begin{proof}[Proof of Theorem~\ref{ribbon-defect}]
	
	Let $\alpha$ be a knot as in the statement of the theorem. Since $\alpha$ is an admissible singularity type for a $p$-fold cover, $\alpha$ itself is $p$-colorable and, moreover, the irregular dihedral $p$-fold cover of $\alpha$ is $S^3$. Let $D\subset B^4$ be a homotopically ribbon disk for $\alpha$. Fix a $p$-coloring of $\alpha$ and let us consider the corresponding invariant $\Xi_p(\alpha)$. By applying the procedure used in the proof of Theorem~1.5 of~\cite{kjuchukova2016classification}, we can construct a $p$-fold irregular dihedral cover $g: M\to S^4$ such that $M$ is a simply-connected manifold and the branching set of $g$ is a two-sphere $S$ with one singularity of type $\alpha$.  By Equation \ref{eulerchar.eqn} we find $\chi(M)=(p+3)/2$.  Since $M$ is simply connected, it follows that the rank of $H_2(M;\mathbb{Z})$ is $(p-1)/2$.  Hence $|\sigma(M)|\leq (p-1)/2$.   Because the base of $g$ is $S^4$, Equation~2 of Theorem~1.4 of~\cite{kjuchukova2016classification}, $$\sigma(M)=3\sigma(S^4)-\frac{p-1}{4}e(S) + \Xi_p(\alpha),$$ simplifies to $\sigma(M)=\Xi_p(\alpha)$.  Hence $|\Xi_p(\alpha)|\leq (p-1)/2$.
    
  Using Equation~2 of Theorem~1.4 of~\cite{kjuchukova2016classification} again, for any 3-fold dihedral cover $f: Y\to X$ as in the statement of this corollary, we have: 
 $$\sigma(Y)=3\sigma(X)-\frac{p-1}{4} e(B) + \Xi_3(\alpha) = 3\sigma(X)-\frac{p-1}{4} e(B) \pm1.$$ As noted earlier, the sign of the last term changes when $\alpha$ is replaced by its mirror. \end{proof}

\begin{proof}[Proof of Corollary~\ref{twobridge}] Since $\alpha$ is $p$-colorable, two-bridge and slice, $\alpha$ is $p$-admissible~\cite{kjuchukova2016classification}. Since it is two-bridge and slice, by Lisca's result~\cite{lisca2007lens}, $\alpha$ is also ribbon and, in particular, homotopically ribbon. Therefore, by Theorem~\ref{ribbon-defect}, $|\Xi_p(\alpha)| \leq (p-1)/2$, and if $p=3$, $|\Xi_3(\alpha)| = 1$. Equation~\ref{Xi=1} holds by the same argument as in the proof of Theorem~\ref{ribbon-defect}.
 \end{proof}

\subsection{An infinite family of covers via the classification of genus one trisections.} Now we give a description of the infinite family of dihedral covers $\mathbb{C}P^2\rightarrow S^4$ (and, by taking mirror images, $\overline{\mathbb{C}P}^2\rightarrow S^4$), again with singularities $\alpha_{6i}$, using the classification of genus one trisections.  Note that if $p=3$ and the central surface of the trisection has genus one, then by Theorem \ref{trisectionlift.thm}, $b=3$.  We present the branching set $B$ using the tri-plane diagram in Figure \ref{triplanediagramfamily.fig}.    In the solid box in the tangle $B_k$, one inserts $k$ vertically stacked copies of the crossing pictured in the solid box in the upper left, and similarly for the dotted box.  The coloring extends over the new arcs when $k$ is a multiple of $3$.  When $k=6i$, each tri-plane diagram describes a singular $(3;1,2,2)$-bridge trisection with singularity $\alpha_{6i}$.  An Euler characteristic argument shows that $\chi(B)=2$, so this tri-plane diagram also describes a slice disk for $\alpha_{6i}$.  These slice disks are not necessarily related to the disks used in the constructions above.  The singularity for $k=0$ is the knot $6_1$, though this tri-plane diagram differs from the one in Figure \ref{triplanediagram61.fig} by concatenation by a braid, which ensures that the pattern of colors along the bottom of the tangle is $(2,2,3,3,1,1)$.  This makes it easier to construct the cover explicitly.  The following theorem implies that the irregular 3-fold cover of $S^4$ branched along $B$ is diffeomorphic to $\mathbb{C}P^2$ or $\overline{\mathbb{C}P}^2$.

\begin{figure}[hbtp]\includegraphics[width=5in]{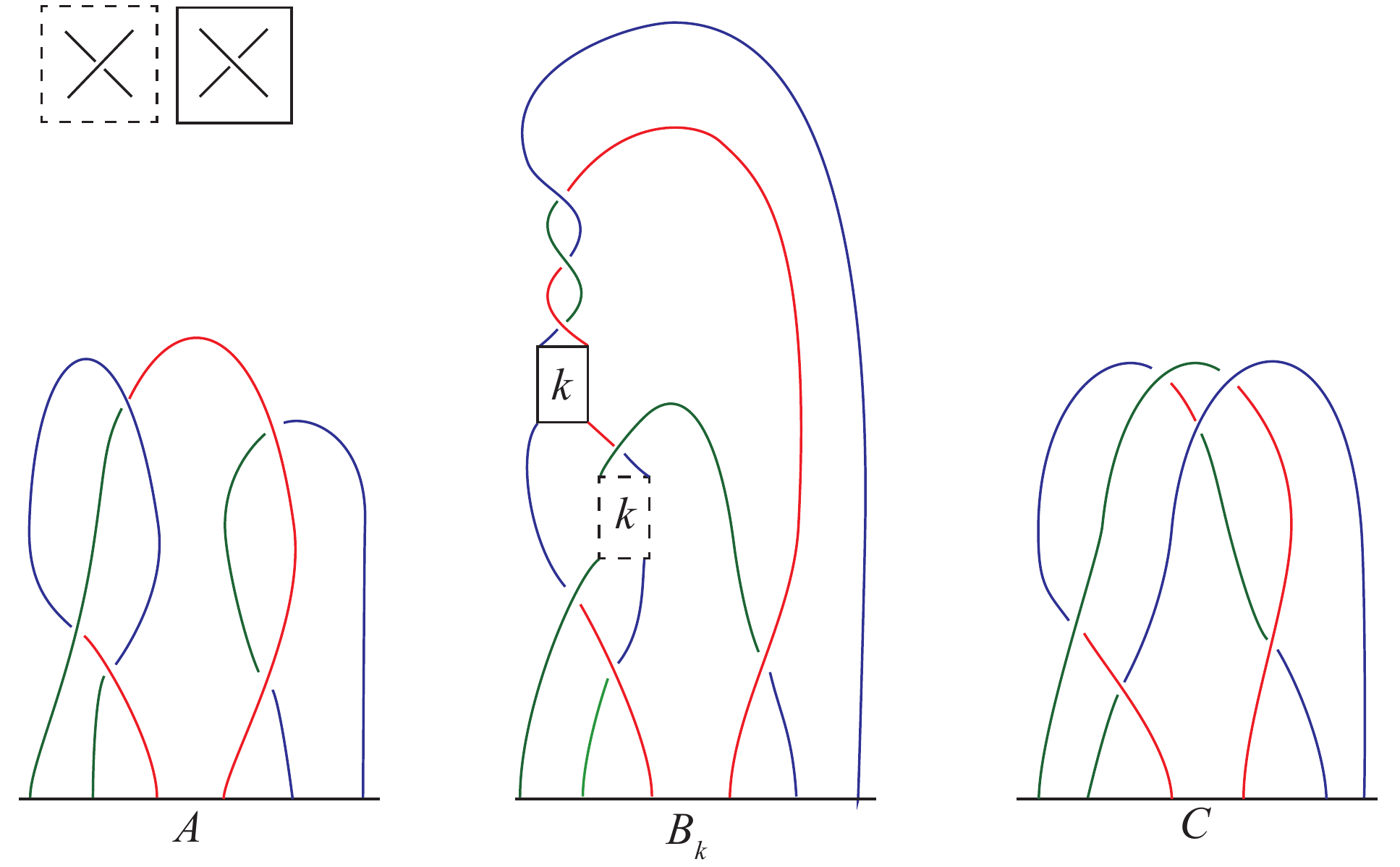}
	\caption{A family of tri-plane diagrams $(A, B_{k}, C)$. When $k$ is a multiple of 3, the coloring is given by extending the coloring in Figure \ref{triplanediagram61.fig} to the new crossings. The colors on the ends of the tangle, from left to right, are $(2,2,3,3,1,1)$.}
	\label{triplanediagramfamily.fig}
	\end{figure}
 \begin{thm} \label{genus-one}
 Let $B\subset S^4$ be a trisected singular surface with one cone singularity of type $\alpha$, and let $\rho:\pi_1(S^4-B)\rightarrow D_3$ be a surjective homomorphism.  Suppose that the three-fold dihedral cover of $S^3$ branched along $\alpha$ is $S^3$.  Then if $b=3$, the tri-plane diagram for $B$ must be a $(3;1,2,2)$ tri-plane diagram, $B$ is homeomorphic to $S^2$, $\alpha$ is slice, and the irregular 3-fold dihedral cover of $S^4$ branched along $B$ is diffeomorphic to $\mathbb{C}P^2$ or $\overline{\mathbb{C}P}^2$.
 	
	 	\end{thm}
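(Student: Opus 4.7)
The plan is to apply Theorem~\ref{trisectionlift.thm} with $p=3$ and $b=3$ to compute the parameters of the lifted trisection of $Y$, and then to use a short combinatorial argument on the matchings underlying the three tangles to force the tri-plane diagram to be of type $(3;1,2,2)$. Writing $L_1=\alpha$, we have $c_1=1$ since $\alpha$ is a knot. Theorem~\ref{trisectionlift.thm} then gives $g=1$, $k_1=0$ (using the hypothesis that the dihedral cover of $\alpha$ is $S^3$), and $k_i=c_i-2$ for $i=2,3$. For each $f^{-1}(X_i)$ to be a genuine boundary connect sum $\natural^{k_i}(S^1\times B^3)$ we need $k_i\geq 0$, so $c_i\geq 2$; since $L_i$ has a $b$-bridge presentation we also have $c_i\leq b=3$. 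Hence $c_i\in\{2,3\}$ for $i=2,3$.

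I would then rule out $c_i=3$ by examining the perfect matchings $\sigma_A,\sigma_B,\sigma_C$ on the $2b=6$ bridge points coming from the trivial tangles $A$, $B$, $C$. The number of components $c_i$ of $L_i$ equals the number of cycles of the union graph of the two matchings it involves. From $c_1=1$ we see that $\sigma_A\cup\sigma_B$ is a single $6$-cycle, so $\sigma_A\cap\sigma_B=\emptyset$. If $c_3=3$, then the three cycles of $\sigma_C\cup\sigma_A$ must all have length $2$, which forces $\sigma_C=\sigma_A$. But then $\sigma_B\cup\sigma_C=\sigma_B\cup\sigma_A$ is again a single $6$-cycle, giving $c_2=1$ and contradicting $c_2\geq 2$. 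The case $c_2=3$ is symmetric, so $(c_1,c_2,c_3)=(1,2,2)$ and the tri-plane diagram has type $(3;1,2,2)$.

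The remaining conclusions follow quickly. The identity $\chi(B)=c_1+c_2+c_3-b=2$ shows that $B\cong S^2$. Inside $X_2\cup X_3\cong B^4$ the union $\mathcal{D}_2\cup\mathcal{D}_3$ is a smoothly embedded surface of Euler characteristic $c_2+c_3-b=1$ with boundary $\alpha$, so it is a smooth slice disk for $\alpha$. With parameters $(1;0,0,0)$, the lifted trisection of $Y$ is determined by a Heegaard triple $(T^2,\mu_1,\mu_2,\mu_3)$ in which each consecutive pair of curves intersects once on the torus (since each $\partial Y_i=S^3$ has only a genus-one Heegaard splitting with $\mu_i\cdot\mu_{i+1}=\pm 1$). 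Up to the mapping class group of $T^2$ and orientation there are exactly two such triples, and these are precisely the standard genus-one trisections of $\mathbb{CP}^2$ and $\overline{\mathbb{CP}}^2$.

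The main obstacle is the matching argument in the second paragraph; the remaining pieces reduce to the Euler characteristic bookkeeping already used elsewhere in the paper and to the classification of genus-one trisected four-manifolds, which I would cite rather than reprove.
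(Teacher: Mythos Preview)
Your proof is correct, and it reaches the same endpoint as the paper's argument: a $(1;0,0,0)$ trisection of the cover, to which the classification of genus-one trisections applies. The route you take to pin down $(c_2,c_3)=(2,2)$, however, is genuinely different.

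The paper obtains $c_2,c_3\geq 2$ (implicitly from the coloring: if some $L_i$ with $i\in\{2,3\}$ were an unknot, the induced Fox coloring would be monochromatic on all six bridge points, contradicting surjectivity of $\rho$) and then simply uses the Euler characteristic bound $\chi(B)=1+c_2+c_3-3\leq 2$ for a closed connected surface to force $c_2+c_3\leq 4$, hence $c_2=c_3=2$. You instead deduce $c_i\geq 2$ from $k_i=c_i-2\geq 0$ via Theorem~\ref{trisectionlift.thm}, and then rule out $c_i=3$ by a purely combinatorial argument on the perfect matchings $\sigma_A,\sigma_B,\sigma_C$ of the six bridge points, using that $c_1=1$ forces $\sigma_A$ and $\sigma_B$ to be disjoint. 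Your matching argument is self-contained and does not appeal to the topology of $B$ at all; the paper's argument is one line but uses that $B$ is a closed connected surface. Either way works, and your final identification of the $(1;0,0,0)$ trisection diagram on $T^2$ is exactly the content of the reference~\cite{meier2016classification} the paper invokes.
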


 \begin{proof} Suppose that $B$ has a 3-colored tri-plane diagram of type $(3;1,c_2,c_3)$.  Then $c_2$ and $c_3$ are each greater than or equal to two.
 	 
	  	 On the other hand, $\chi(B)=1+c_2+c_3-3\leq 2$, so $c_2+c_3\leq 4$. Hence $c_2=c_3=2$.  This implies $B$ is $S^2$ and $\alpha$ is slice.
	
 	Now we consider the corresponding trisection diagram of the irregular 3-fold dihedral cover of $S^4$ branched along $Y$.  Since $b=3$, we must have $g=1$.  Furthermore, if the tri-plane diagram is $(A,B,C)$, both $B\cup \bar{C}$ and $B\cup\bar{A}$ are two-component unlinks $L_1$ and $L_2$ in $S^3$, and $\rho|_{S^3-L_i}$ is surjective.   
 	Thus the irregular 3-fold cover of $S^3$ branched along either one of the $L_i$ must be homeomorphic to $S^3$, since each of the $L_i$ are clearly two-bridge links.  Thus we get a $(1;0,0,0)$ trisection.   Hence we see~\cite{meier2016classification} that the cover is $\mathbb{C}P^2$ or $\overline{\mathbb{C}P}^2$.
 	\end{proof}
 
\subsection{Explicit construction of an infinite family of covers via trisection diagrams} Finally, we describe a hands-on method for constructing the same family of 3-fold irregular dihedral covering maps $\mathbb{C}P^2\rightarrow S^4$ using trisections.  This method is general, so it can be used to identify a covering manifold when the simple arguments above do not apply.  By a small modification of our branching set, we also obtain an infinite family of covers $S^4\rightarrow S^4$.

By Theorem \ref{trisectionlift.thm},  the cover $Y_{k}$ corresponding the singularity $\alpha_k$, with $k=6i$, will be equipped with a $(1;0,0,0)$-trisection.  Namely, the central surface $F$ is a torus, and each $Y_k$ is a 4-ball.  The boundaries $\partial Y_k=S^3$ are each decomposed as a union of two solid tori, with Heegaard surface $F$.  Now we examine in more detail how the pieces are built, in order to produce a trisection diagram on the torus for each $Y_{k}$.

	Let $\mathcal{T}=\{t_1,t_2,t_3\}$ be a $3$-strand trivial tangle in $B^3$, with shadows $\{d_1,d_2, d_3\}$ and bridge disks $\{\Delta_1,\Delta_2,\Delta_3\}$, and let $\rho:\pi_1(B^3-\mathcal{T})\twoheadrightarrow D_3$ be a surjective homomorphism.  Then the corresponding irregular 3-fold dihedral cover of $B^3$ branched along $\mathcal{T}$ is a solid torus $T_{\mathcal{T}}$.  For each $i\in \{1,2,3\}$ denote the three lifts of $d_i$ and its corresponding bridge disk by $d_i^j$ and $\Delta_i^j$, for $j=1,2,3$.  For each $i=1,2,3$, there exist $j,k\in\{1,2,3\}$ such that $\Delta^j_i$ and $\Delta^k_i$ share boundary along the index two lift $t^2_i$ of $t_i$.  In particular $\Delta^j_i\cup_{t^2_i}\Delta^k_i$ is a properly embedded disk in $T_{\mathcal{T}}$, and there exist two indices $k,l\in \{1,2,3\}$ such that $d^k_i(0)=d^l_i(0)$ and $d^k_i(1)=d^l_i(1)$.  Hence two of the lifts of each shadow, when concatenated, form a closed loop bounding $\Delta^j_i\cup_{t^2_i}\Delta^k_i$, though we will see shortly this may not be a compressing disk.  We call this loop a {\it closed shadow}.  In general, for a 3-fold dihedral cover, an $n$-strand trivial tangle gives rise to $n$ closed shadows, one for each strand of the tangle.
 	\begin{prop}  \label{meridians.prop}  Given a $3$-strand trivial tangle and a surjective homomorphism $\rho:\pi_1(B^3-\mathcal{T})\twoheadrightarrow D_3$, either two or three of its closed shadows are meridians of the solid torus solid torus $T_{\mathcal{T}}$.
		\end{prop}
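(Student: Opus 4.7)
The strategy is to realize the central surface $F=\partial T_\mathcal{T}$ as a branched cover of $S^2=\partial B^3$ by a cut-and-paste construction, identify each closed shadow $\gamma_i$ with a distinguished simple closed curve on $F$, and determine its homotopy class there. By Theorem~\ref{trisectionlift.thm} with $p=3$ and $b=3$, the genus of $F$ is $1$, so $F$ is a torus. Because each $\gamma_i$ bounds the embedded disk $\Delta_i^j\cup_{t^2_i}\Delta_i^k$ in the solid torus $T_\mathcal{T}$ by construction, $\gamma_i$ is a meridian of $T_\mathcal{T}$ if and only if it is essential on the torus $F$. Hence the proof reduces to counting how many of the three $\gamma_i$ are essential on $F$.

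Cutting $S^2$ along the three disjoint shadows $d_1\cup d_2\cup d_3$, a short Euler characteristic calculation shows the result is a connected pair of pants. Construct $F$ by taking three copies of this pair of pants, one per sheet of the cover, and gluing along the slits: at the slit over $d_i$, the two sides of the slit in the two sheets swapped by $\sigma_i$ glue across sheets, while in the sheet fixed by $\sigma_i$ the two sides glue to each other and thereby reverse the artificial cut. Consequently, any given sheet sits in $F$ as $S^2$ cut along exactly those $d_j$ for which $\sigma_j$ does \emph{not} fix that sheet, and is therefore a pair of pants, an annulus, a disk, or $S^2$ according to whether $0$, $1$, $2$, or $3$ of the three monodromies fix it.

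Since $\rho$ is surjective and each $\sigma_i$ is a transposition in $D_3=S_3$, up to conjugation there are exactly two possibilities: (A) two of the $\sigma_i$ coincide and the third is different, or (B) the $\sigma_i$ are pairwise distinct. A fourth sheet with $3$ fixing monodromies would force all $\sigma_i$ equal and $\rho$ non-surjective, and $S_3$ contains no transpositions outside the conjugacy class of $(12)$ to allow other configurations. In case~(A) the sheet fixed by the repeated transposition is fixed by two of the three monodromies and is therefore a disk, while the remaining two sheets are an annulus and a pair of pants. In case~(B) each sheet is fixed by exactly one monodromy and is therefore an annulus, and the three annuli are glued cyclically along the three slits.

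Finally, the closed shadow $\gamma_i$ is precisely the image of the slit $d_i$ in $F$. If a sheet is a disk, its unique boundary circle is the corresponding $\gamma_i$ and bounds an embedded disk in $F$, so that $\gamma_i$ is inessential on $F$. Otherwise, the two sheets adjoining $\gamma_i$ are connected to each other through the remaining gluings even after the gluing along $\gamma_i$ is severed, so $\gamma_i$ is non-separating on $F$ and hence essential. This yields exactly one inessential shadow in case~(A), namely the one associated with the odd-colored strand, producing two meridians; and zero inessential shadows in case~(B), producing three meridians. The main subtlety to verify carefully is the gluing convention at a fixed sheet: namely, that the monodromy acts as the identity on such a sheet so that the two sides of the slit there truly glue by the identity, leaving no branching behind.
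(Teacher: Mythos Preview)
Your proof is correct and follows the same two-case analysis as the paper: either the three meridional transpositions $\sigma_i$ are pairwise distinct, or exactly two coincide. The paper carries out each case by drawing the branched cover of the disk over three points explicitly (Figure~\ref{annuluslifts.fig}) and reading the closed shadows off the picture; you instead rebuild the boundary torus $F$ abstractly as a gluing of sheets---a disk, an annulus, and a pair of pants in case~(A), three annuli glued cyclically in case~(B)---and then decide essentiality of each $\gamma_i$ by a non-separating argument. The two arguments are really the same construction viewed differently: your cut-and-paste description is exactly what the paper's figure depicts. Your version is figure-free and makes the logic explicit; the paper's is shorter but asks the reader to extract the conclusion from a picture.

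One small point of precision: when you write ``the closed shadow $\gamma_i$ is precisely the image of the slit $d_i$ in $F$,'' this should be sharpened to ``the seam circle along slit $d_i$ between the two sheets swapped by $\sigma_i$.'' The fixed sheet also contains an image of $d_i$, but there the slit reglues to an arc, not a circle, and that arc is the third (open) lift of $d_i$ rather than part of $\gamma_i$. With that clarification your identification of the disk-sheet boundary in case~(A) with $\gamma_j$ for the odd-colored strand $j$ goes through cleanly.
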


\begin{proof}  Let $\mu_i$ be a meridian of $t_i$ in $B^3$.  We view $(B^3,\{t_1,t_2,t_3\})=(D^2,\{p_1,p_2,p_3\})\times [0,1]$.  The cover of $D^2$ branched along $\{p_1,p_2,p_3\}$ is an annulus.   
	
	First we consider the case where the values $\rho(\mu_i)$ are all distinct.  Without loss of generality, assume $\rho(\mu_i)=i\in \{1,2,3\}$, where as usual, $i$ corresponds to the transposition with fixed point $i$.  An explicit construction of the cover is given in the top row of Figure \ref{annuluslifts.fig}, and it is clear that each closed shadow is a meridian.
	
	The remaining case is that two exactly values of $\rho(\mu_i)$ are equal.  Without loss of generality, assume $\rho(\mu_1)=\rho(\mu_2)=1$, and $\rho(\mu_3)=2$.  An explicit construction of the cover is given in the bottom row of Figure \ref{annuluslifts.fig}.  In this case, two closed shadows are meridians; the other is a nullhomotopic curve on $\partial T_{\mathcal{T}}$. \end{proof}
	
	\begin{figure}[htbp]\includegraphics[width=3in]{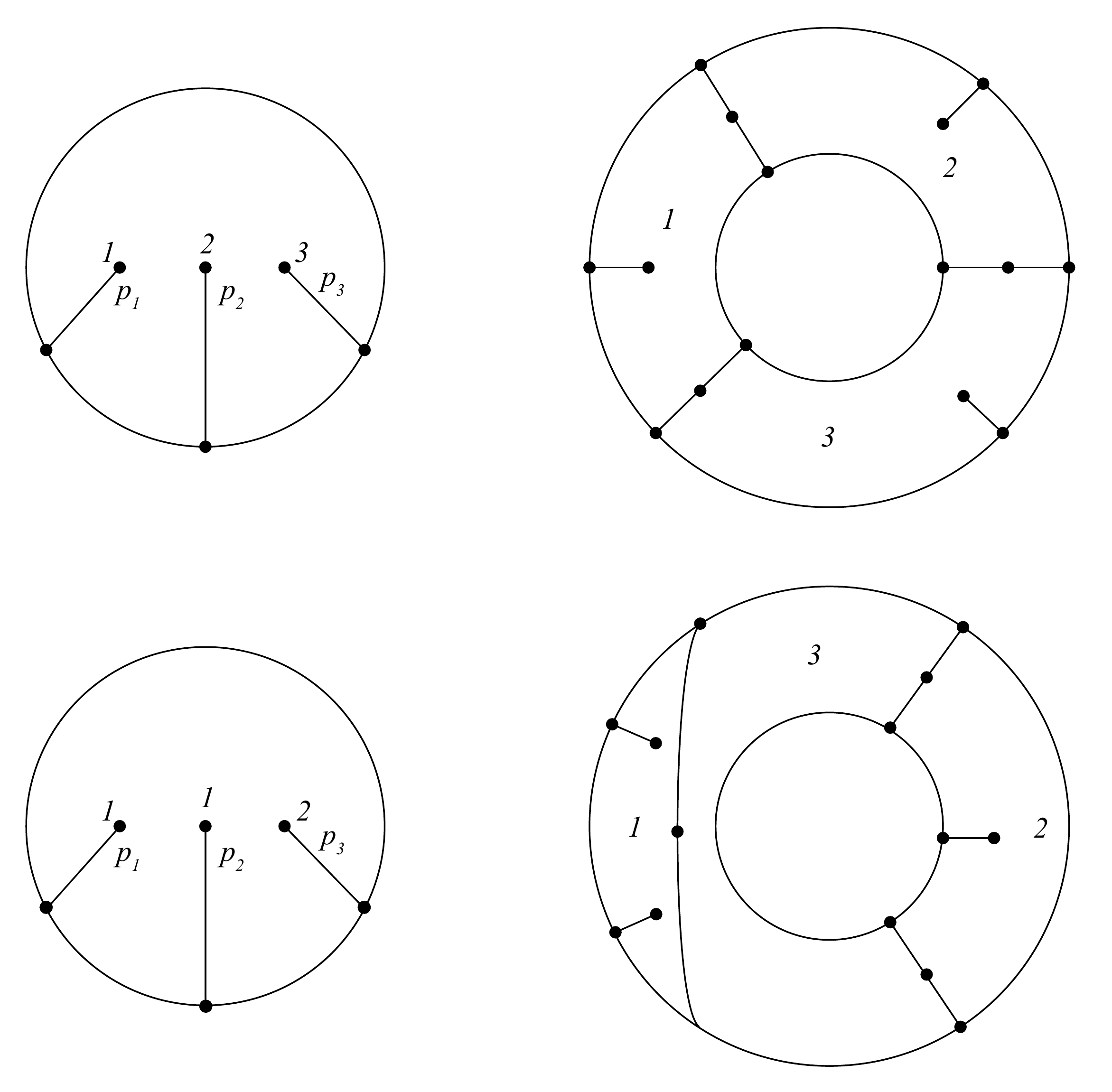}
		\caption{Two 3-fold irregular dihedral covers of the disk branched over three points.}
		\label{annuluslifts.fig}
		\end{figure}
	
\begin{ex}		We begin by constructing the 3-fold irregular cover of $S^4$ branched along a two-sphere with a singularity of type $\alpha=6_1$, where the two-sphere is presented by the tri-plane diagram $(A,B_0,C)$ in Figure \ref{triplanediagramfamily.fig}.  Each tangle is trivial, so for each tangle, we may choose three shadows. These paths are pictured in Figure \ref{diskbottoms61.fig},
			\begin{figure}[h]\includegraphics[width=3in]{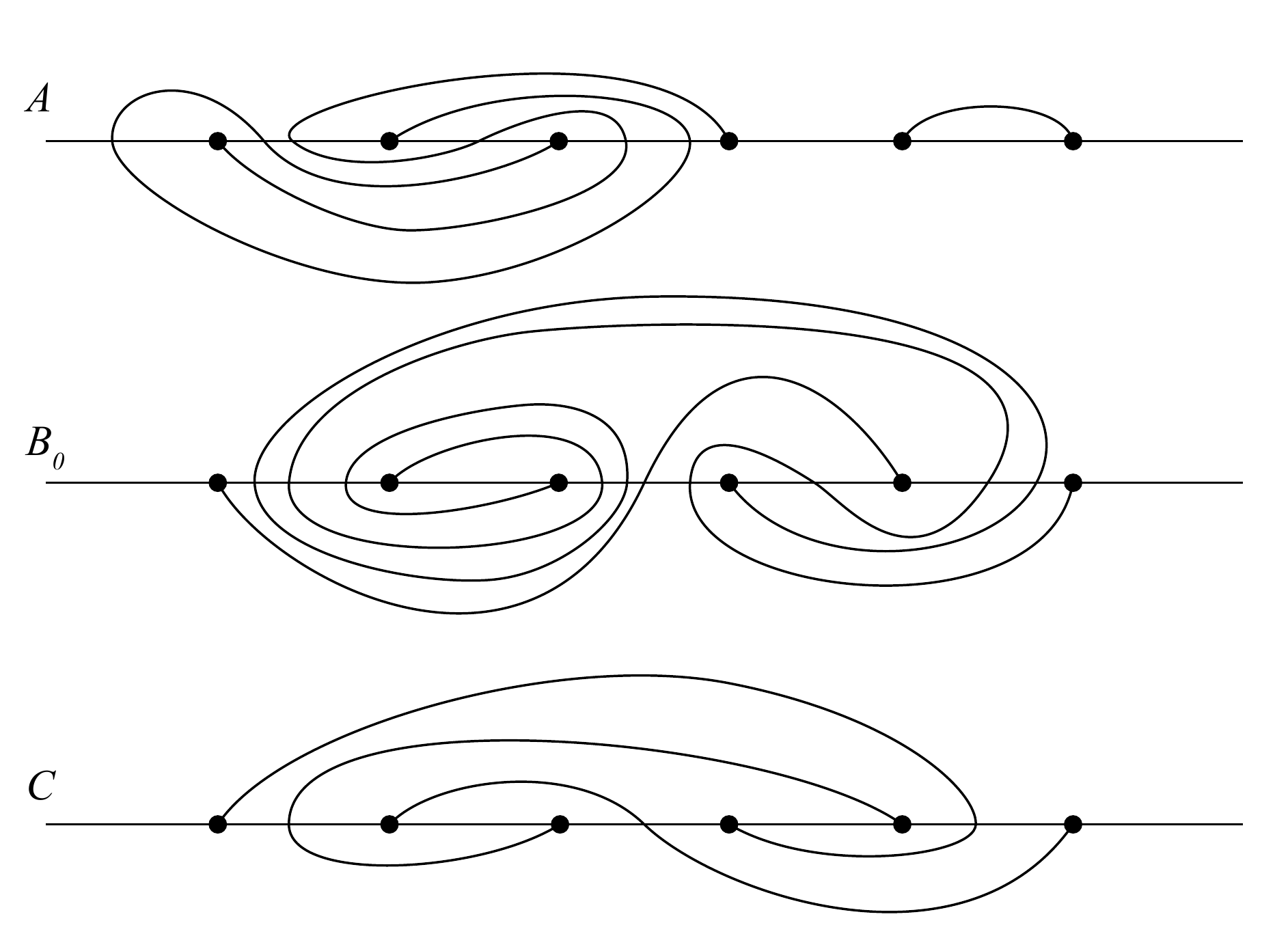}
			\caption{Shadows for the trivial tangles $A$, $B_0$, and $C$ in Figure \ref{triplanediagramfamily.fig}.}
			\label{diskbottoms61.fig}
			\end{figure}
By Proposition ~\ref{meridians.prop}, for each tangle $A$, $B_0$, and $C$, either two or three of the corresponding closed shadows are meridians in the covering solid torus.  It turns out that for $A$ and $B_0$, two are meridians, while for $C$, all are meridians.  For each tangle $A$, $B_0$, and $C$, we choose a shadow such that the corresponding closed shadow is a meridian.  These three shadows are drawn on the sphere in Figure \ref{spheretoruscover.fig}.  We equip $S^2$ with a cell structure, consisting of vertices $\{a,b,c,d,e,f\}$, edges $x_i$ and $y_i$, and two 2-cells $D$ and $E$.  This cell structure lifts to a cell structure on the torus, and allows us to draw the closed shadows corresponding to the shadow arcs on $S^2$.   The result is a trisection diagram on the torus.  For $(A,B_0,C)$ we obtain a trisection diagram for $\mathbb{C}P^2$.
\end{ex}

			\begin{figure}[htbp]\includegraphics[width=6in]{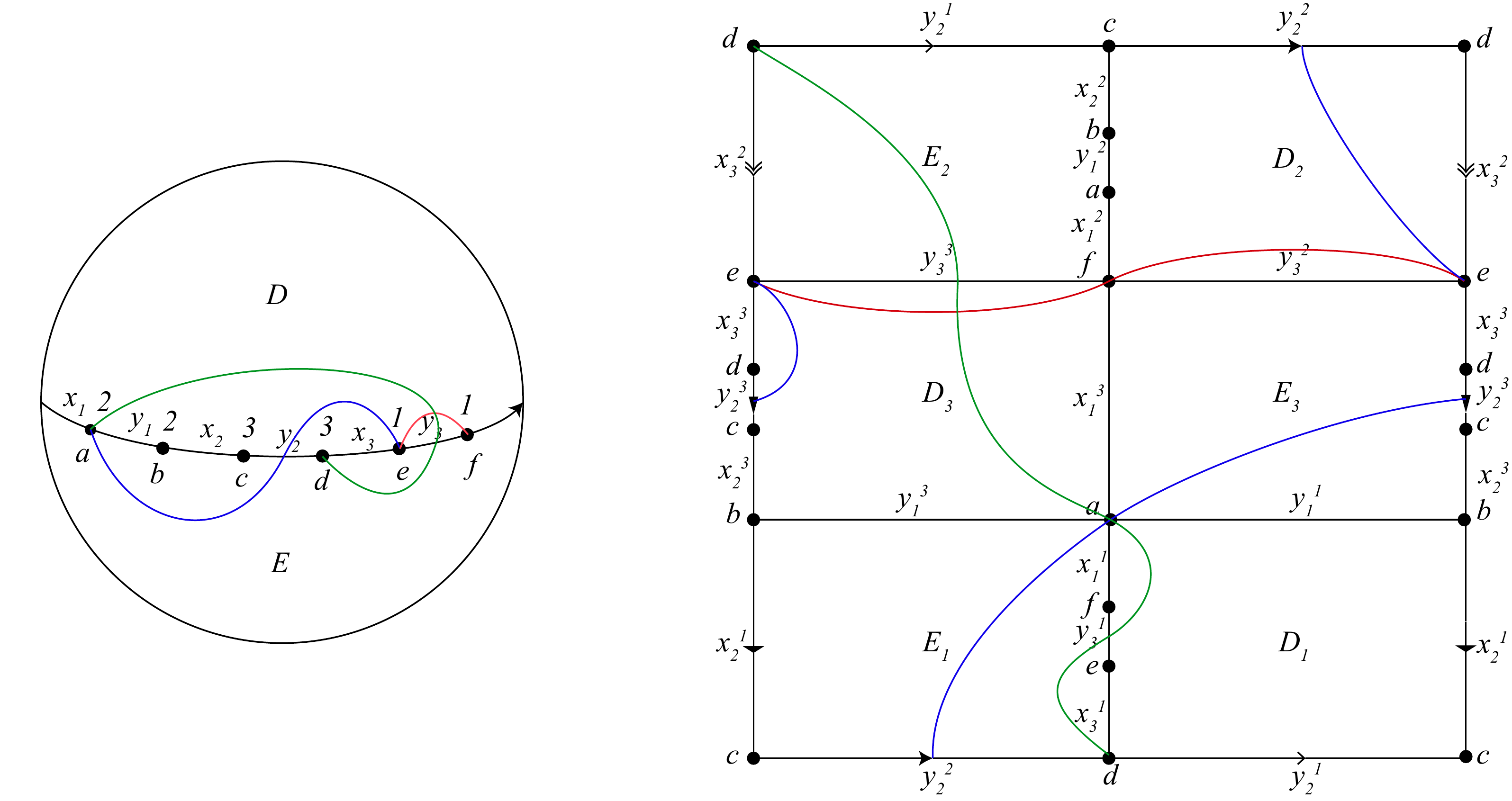}
				\caption{Lifts of one shadow for each of the tangles $A$ (red), $B_0$ (blue), and $C$ (green) in Figure \ref{triplanediagramfamily.fig} to the torus.  Note that the gluing of the top and bottom edges is not the standard one.}
				\label{spheretoruscover.fig}
				\end{figure}

	Now we introduce notation for the lifts of our cells to the torus, in order to lift the more complicated shadows for the family of tangles $(A,B_k,C)$.  We cut along the paths $y_1$, $y_2$, and $y_3$ in $S^2$ to obtain a sphere with three holes, and take three copies of the result.  Call these $P_1$, $P_2$, and $P_3$.  The cells on $P_i$ are labeled in Figure \ref{torusassembly.fig}.  To obtain the torus in Figure \ref{spheretoruscover.fig}, we make the following identifications, according to the colors on the branch points:
	
	$$y_1^1\sim z_1^3,\quad y_1^2\sim z_1^2, \quad y_1^3\sim z_1^1$$
	$$y_2^1\sim z_2^2,\quad y_2^2\sim z_2^1, \quad y_2^3\sim z_2^3$$
	$$y_3^1\sim z_3^1, \quad y_3^2\sim z_3^3, \quad y_3^3 \sim z_3^2$$	
				\begin{figure}\includegraphics[width=2in]{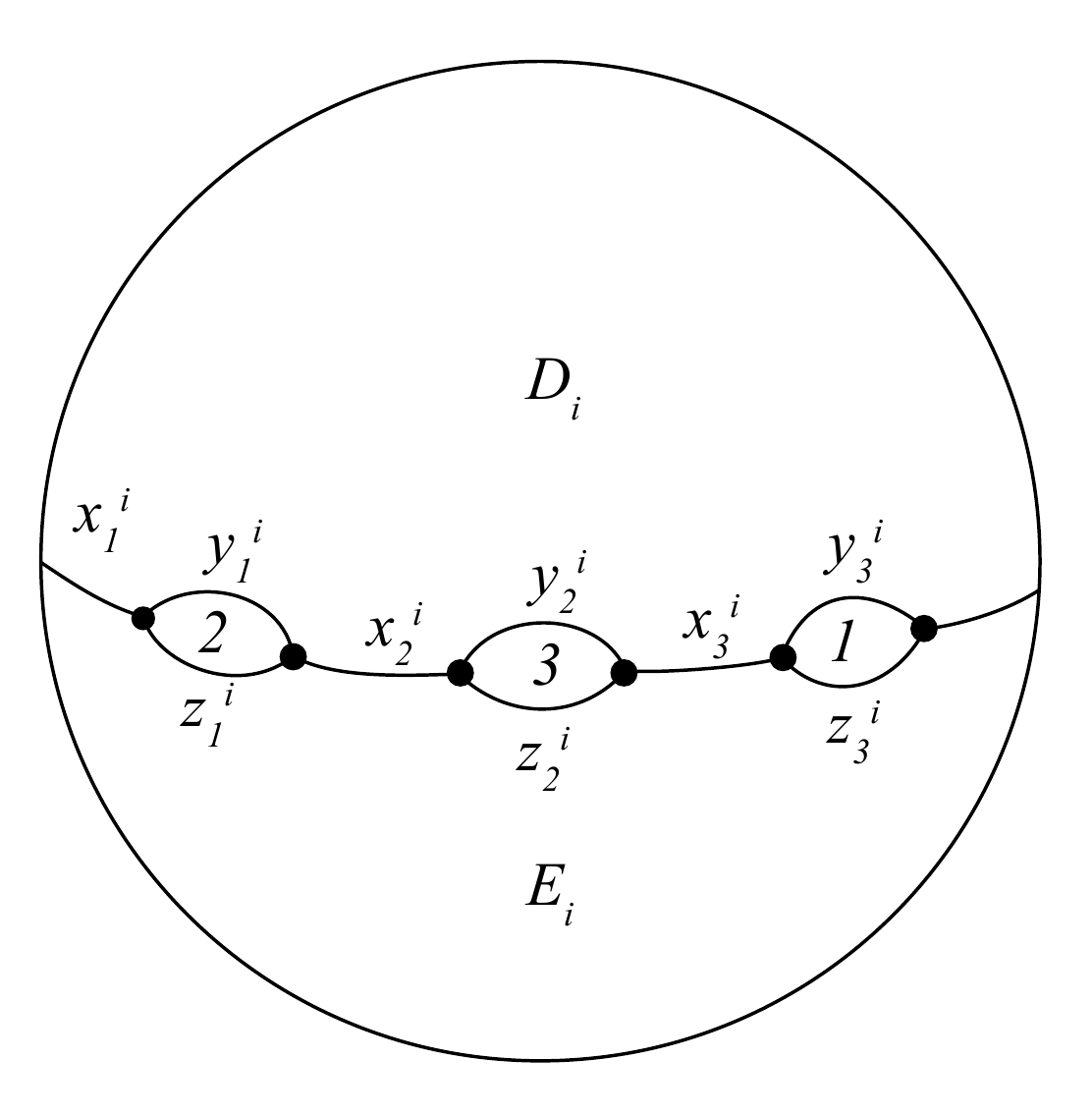}
					\caption{Notation for the three lifts of the cells in the structure on $S^2$.}
					\label{torusassembly.fig}
					\end{figure}
Next we draw a shadow for each of the $B_k$.  Note that our tri-plane diagram is only 3-colorable when $k$ is a multiple of 3, and $\alpha_k$ is only a knot when $k$ is even, but it is easier to describe the shadows for all $k$. The shadows for $B_0$ and $B_1$ are pictured in the first two lines of Figure \ref{diskfamily.fig}. The shadows for $B_{2k}$ and $B_{2k+1}$ respectively are obtained from the boundary paths for $B_0$ and $B_1$ by applying successive twists along the dotted curve.
					
\begin{figure}\includegraphics[width=3in]{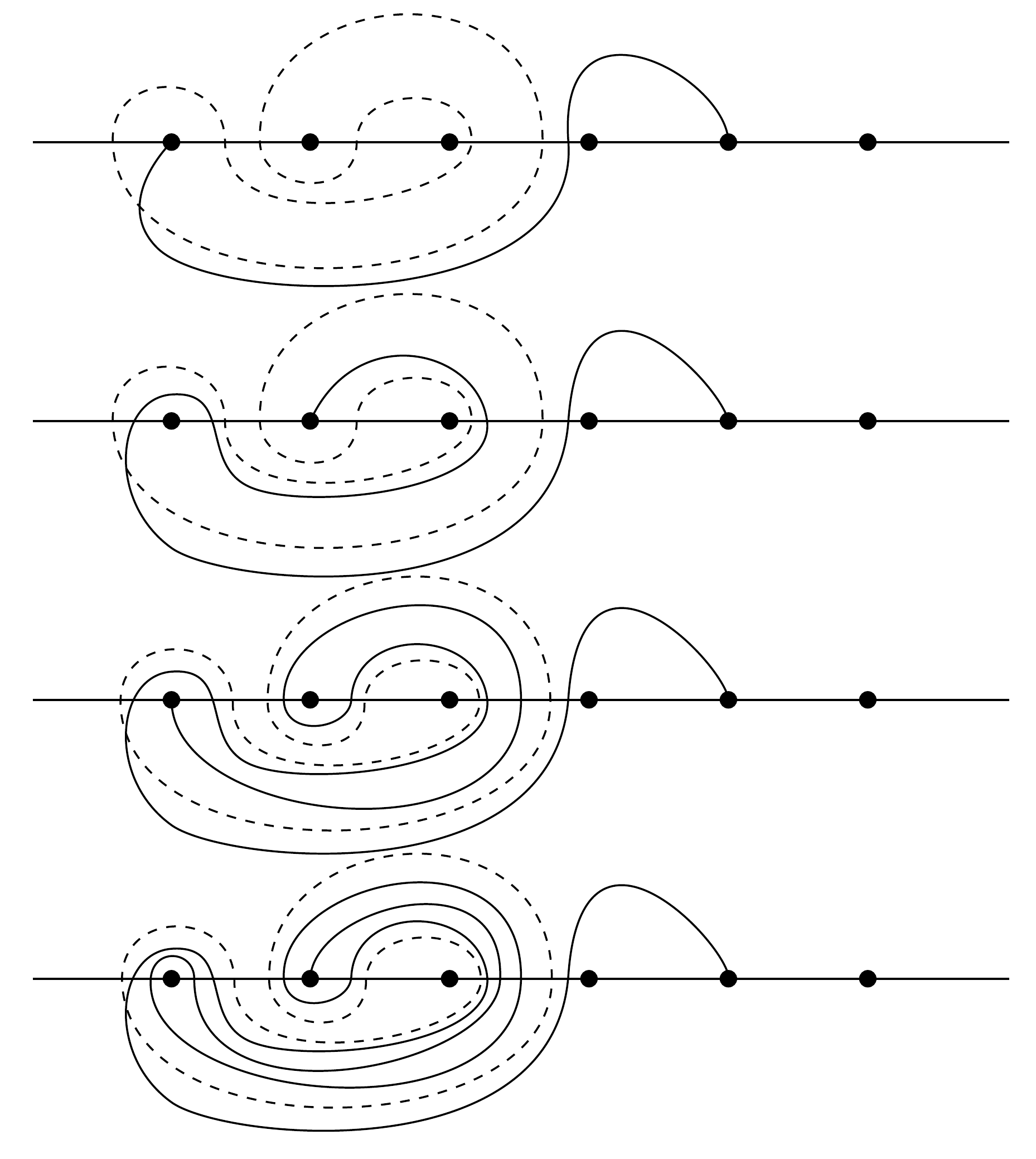}
	\caption{Shadows for the families $B_{2k}$ and $B_{2k+1}$.}
	\label{diskfamily.fig}
\end{figure}

Each shadow can be represented as a word in the $x_i$ and $y_i$.  Its lift to the torus can be represented as a word in the $x_i^j$ and $y_i^j$.

\begin{ex}\label{s4tos4}
We construct the cover of $S^4$ branched along the colored surface described by $(A,B_{6i},C)$. The shadow of $B_{6i}$ is represented by the word $$y_2(x_1y_1y_2x_2y_1y_2)^{3i}.$$  The two lifts of this path to the torus, which together form a closed shadow, are
$$y_2^2\left((x_1^1y_1^1y_2^3x_2^3y_1^1y_2^1)(x_1^2y_1^2y_2^1x_2^1y_1^3y_2^3)(x_1^3y_1^3y_2^2x_2^2y_1^2y_2^2)\right)^i$$ and 
$$y_2^3\left((x_1^3y_1^3y_2^2x_2^2y_1^2y_2^2)(x_1^1y_1^1y_2^3x_2^3y_1^1y_2^1)(x_1^2y_1^2y_2^1x_2^1y_1^3y_2^3)\right)^i.$$

The corresponding closed shadows are all homotopic to the closed shadow for $B_0$.  Hence in all cases the cover is $\mathbb{C}P^2$ or $\overline{\mathbb{C}P}^2$, depending on choice of orientation.
\end{ex}
\begin{ex}
We construct the cover of $S^4$ branched along the colored surface described by $(A,B_{6i+3},C)$.   The shadow of $B_{6i+3}$ is represented by the word $$y_2(x_1y_1y_2x_2y_1y_2)^{3k}(x_1y_1y_2x_2y_1y_2x_1y_1y_2).$$  The two lifts of this path to the torus, which together form a closed shadow, are represented by the words
$$y_2^2\left((x_1^1y_1^1y_2^3x_2^3y_1^1y_2^1)(x_1^2y_1^2y_2^1x_2^1y_1^3y_2^3)(x_1^3y_1^3y_2^2x_2^2y_1^2y_2^2)\right)^i(x_1^1y_1^1y_2^3x_2^3y_1^1y_2^1x_1^2y_1^2y_2^1)$$
and 
$$y_2^3\left((x_1^3y_1^3y_2^2x_2^2y_1^2y_2^2)(x_1^1y_1^1y_2^3x_2^3y_1^1y_2^1)(x_1^2y_1^2y_2^1x_2^1y_1^3y_2^3)\right)^i(x_1^3y_1^3y_2^2x_2^2y_1^2y_2^2x_1^1y_1^1y_2^3).$$

In this case the closed shadows are isotopic to those of the tangle $C$, so the corresponding cover is $S^4$~\cite{meier2016classification}.  In this case the singularity $\alpha_{6i+3}$ is a two-component link, so the branching set has a self-intersection.

\end{ex}

\section{Computing the Signature Defect}\label{examples}
So far we have seen only knots whose signature defect is $\pm 1$.  Finding colored tri-plane diagrams for a given singularity can be difficult, especially when $\alpha$ has large dihedral 4-genus.   Here we present a combinatorial procedure for computing the defect from a knot diagram, which can be used to determine the defect for any admissible knot.   
\subsection{Overview of the procedure }

The formula for the signature defect involves invariants of $\beta$, a mod 3 characteristic knot for $\alpha$, as well as the signature of a matrix whose entries are linking numbers of curves in an irregular $p$-fold dihedral cover of $S^3$ branched along $\alpha$.  These linking numbers represent intersection numbers of relative cycles in a four-manifold $W_{\alpha,\beta}$, a cobordism from the irregular $p$-fold dihedral cover of $S^3$ along $\alpha$ to the cyclic cover of $S^3$ along $\beta$, constructed by Cappell and Shaneson \cite{CS1984linking}.

First we briefly explain how the signature defect for $\alpha$ is computed, and in particular, what work is needed to pass from the geometric formula in \cite{kjuchukova2016classification} to a computation involving only diagrammatic information.

	To compute the defect, we must compute the signature of a matrix of linking numbers of curves which cobound relative cycles in $W_{\alpha,\beta}$.  The first steps are as follows.
	\begin{enumerate}
		\item Choose a diagram and a Seifert surface $V$ for $\alpha$
		\item Find a characteristic knot $\beta\subset V$ for $\alpha$, and choose an orientation for $\beta$
		\item Choose a basis $\mathcal{B}_V=\{\omega_i\}_{i=1}^{2g-2}\cup \{\beta_r,\beta_l\}$for $H_1(V-\beta;\mathbb{Z})$, where $g$ is the genus of $V$ and $\beta_r$ and $\beta_l$ are right and left push-offs of $\beta$ in $V$
				\item Compute the linking numbers of these basis elements in the 3-fold dihedral cover of $S^3$ branched along $\alpha$, using the algorithm in \cite{cahnkjuchukova2016linking}
		\end{enumerate}

The difficulty is that not all of the linking numbers computed above contribute to the defect.  The next step of the procedure is to identify the curves whose linking numbers do appear using only diagrammatic information.  Briefly, we use a construction of the irregular dihedral cover $M_{\alpha}$ of $S^3$ branched along $\alpha$, due to Cappell and Shaneson \cite{CS1984linking}.  In this construction, one begins with the cyclic cover of $S^3$ branched along $\beta$, removes a certain handlebody from the interior to obtain a 3-manifold with one boundary component, and then obtains a closed 3-manifold by gluing that boundary component to itself via an involution.  The resulting three-manifold is $M_\alpha$.  The curves whose linking numbers appear in the defect lie on the boundary of the handlebody above, and must be in the kernel of a map which we discuss in detail in the proof of Theorem \ref{procedurethm}.  Before beginning the proof, we illustrate Theorem \ref{procedurethm} in two examples.

\begin{ex}\label{61procedure.ex} In this example we compute $|\Xi_3(6_1)|$ using Theorem \ref{procedurethm}.

The knot $6_1$ is the three-colorable two-bridge slice knot of smallest crossing number, so is the simplest example to which Corollary \ref{twobridge} applies.  We varify that $|\Xi_3(6_1)|=\pm 1$ independently using Theorem \ref{procedurethm}. We will use the three-coloring and the Seifert surface $V$ pictured in Figure \ref{61seifert.fig}.  
\begin{figure}[htbp]
	\includegraphics[width=2in]{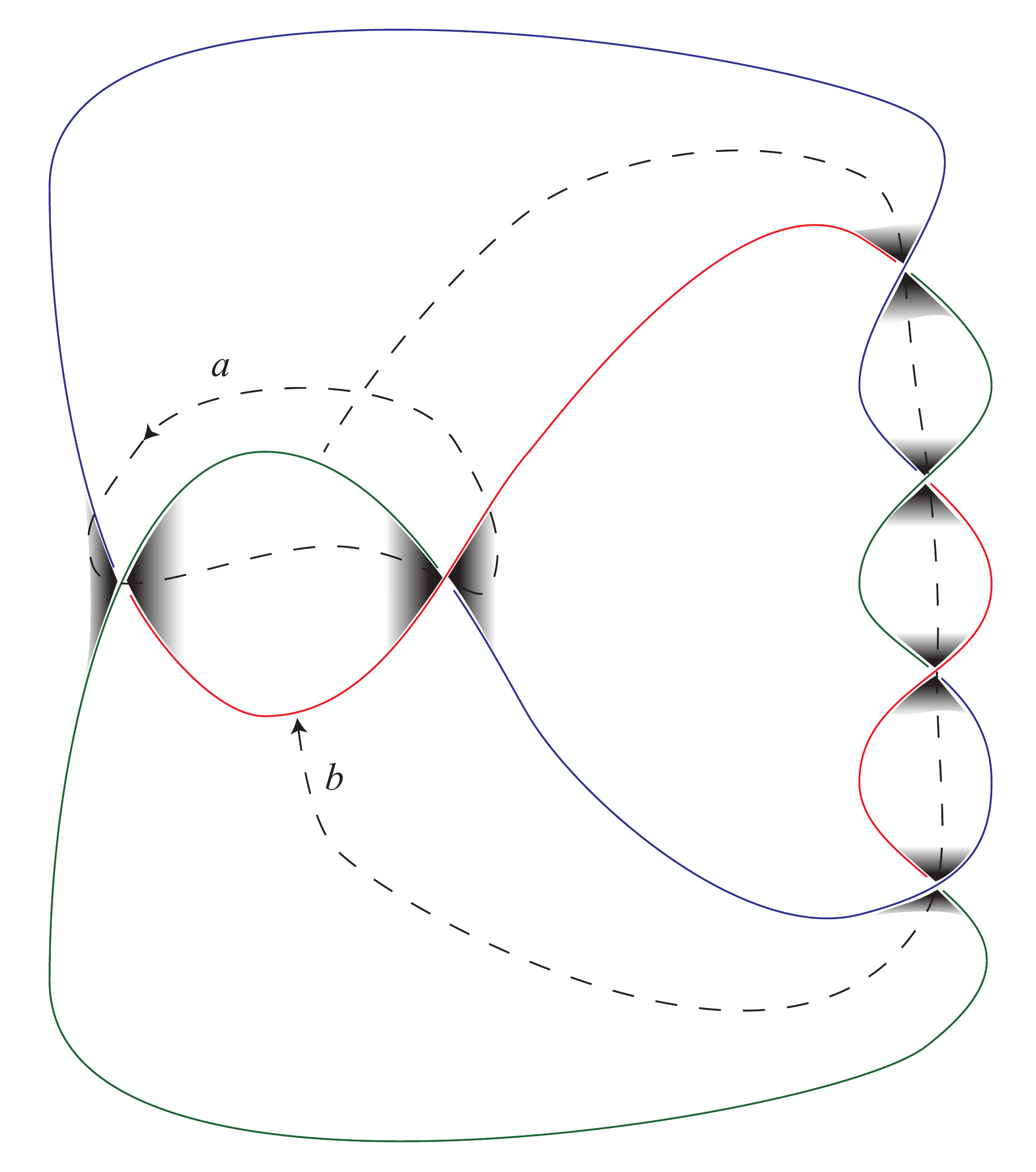}
	\caption{The knot 6-1, a Seifert surface $V$, and a basis $\{a,b\}$ for $H_1(V;\mathbb{Z})$.}\label{61seifert.fig}
	\end{figure}
	\begin{figure}[htbp]
		\includegraphics[width=5in]{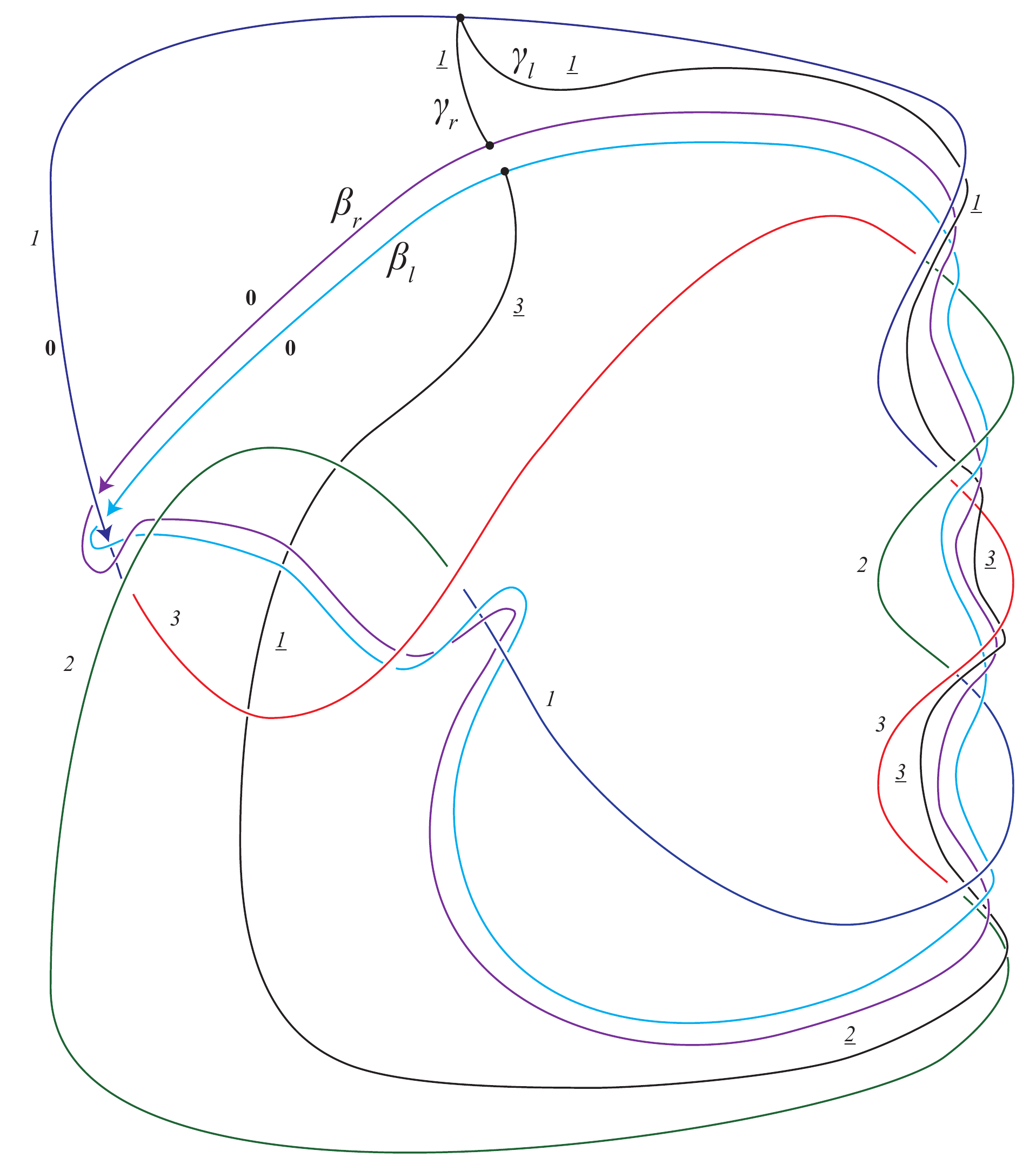}
		\caption{The numbered link diagram for $6_1$, together with the push-offs $\beta_r$ and $\beta_l$ of the characteristic knot, and the anchor paths $\gamma_r$ and $\gamma_l$.}
		\label{6-1numbereddiagram.fig}
		\end{figure}
We begin by finding a mod 3 characteristic knot $\beta$ for this three-colored 6-1 diagram.  With respect to the basis $\{a,b\}$ we compute the symmetrized linking form
 $$L_V+L_V^{t}=\begin{pmatrix}-2&1\\1&4\end{pmatrix}.$$  
 
 Recall that a characteristic knot $\beta$ satisfies $(L_V+L_V^t)\beta \equiv 0 \mod 3$.  Hence $\beta=a-b$ is a mod 3 characteristic knot.  Since $V$ has genus one, our basis $\mathcal{B}_V$ consists only of $\beta_r$ and $\beta_l$.  An embedded representative of the class $\beta$, together with a choice of anchor paths $\gamma_r$ and $\gamma_l$, is shown in Figure \ref{61seifertwithbeta.fig}.   We indicate a numbering of the arcs of $\alpha$, $\beta_r$, and $\beta_l$ by marking the zeroth arc of each in bold.  The other numbers are assigned as described above, but are omitted from Figure \ref{61seifertwithbeta.fig} to avoid clutter.
 \begin{figure}
	\includegraphics[width=2in]{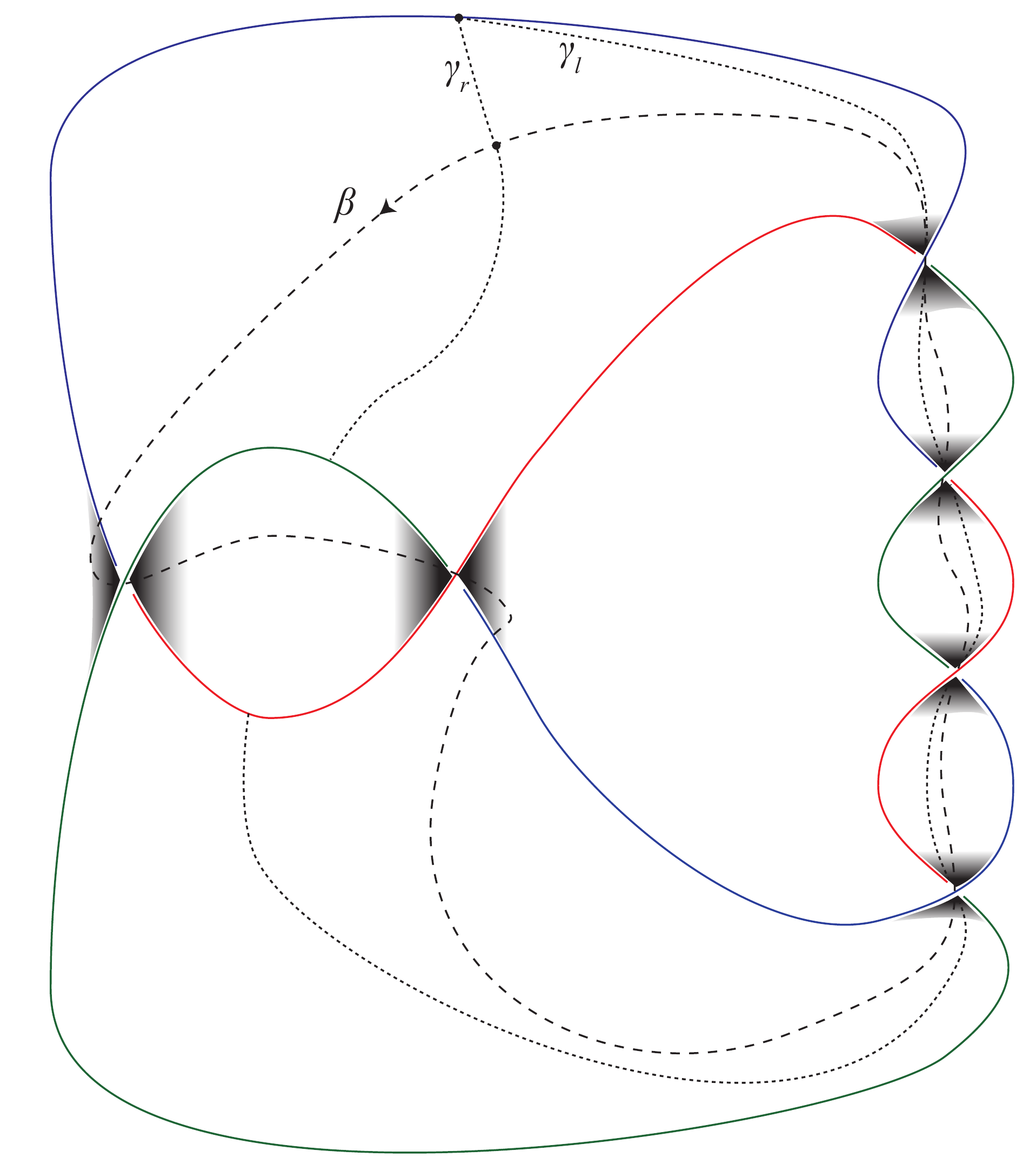}
	\caption{The knot 6-1 together with a characteristic knot $\beta$, and the corresponding arcs $\gamma_r$ and $\gamma_l$.}\label{61seifertwithbeta.fig}
	\end{figure}

The input for the computer program consists of seven lists.  We summarize this briefly here; for detailed examples see \cite{cahnkjuchukova2016linking}.  The first four are associated to the knot $\alpha$.  The remaining six lists are associated to the two curves $g$ and $h$ described in the introduction.  The first list denotes the number $f=(f(i))_i$ on the over-arc which meets the head of arc $i$ of $\alpha$.  The second $\epsilon=(\epsilon(i))_i$ denotes the local writhe number at the head of arc $i$.  The third $t=(t(i))_i$ denotes the {\it type} of crossing at the head of arc $i$; we let $t(i)=p$ of the over-arc at the head of arc $i$ is an arc of $g$, and we let $t(i)=k$ if the over-arc at the head of arc $i$ is another arc of $\alpha$.  Recall that the $i^{th}$ arc of $\alpha$ may be a union of smaller arcs, separated by over-crossings by arcs of $h$, and the over-crossing at the end of an arc of $\alpha$ will never be an arc of $h$ due to our numbering system.  Finally, the fourth list $c=(c(i))_i$ is the color on the $i^{th}$ arc of $\alpha$.

Numbering, signs, crossing types, and colors for $\alpha$:
$$f=(1,8,0,7,10,5,3,2,4,6,6,4)$$
$$\epsilon=(-,+,-,-,-,-,+,+,+,-,+,-)$$
$$t=(p,k,k,p,k,p,p,k,p,k,p,k)$$
$$c=(1,1,3,2,2,1,1,1,2,2,3,3)$$

The remaining lists are the over-crossing numbers, signs, and crossing types for the other two components $g$ and $h$ of the link diagram.

Numbering, signs, and crossing types for $\beta$:
$$(0,8,2,6,6,10,4,0)$$
$$(-,+,-,+,-,+,-,+)$$
$$(k,k,k,k,k,k,k,k)$$

Numbering, signs, and crossing types for $\beta_r$:
$$(0,8,2,3,6,4,6,10,6,4,0)$$
$$(-,+,-,-,+,+,-,+,+,-,+)$$
$$(k,k,k,p,k,p,k,k,p,k,k)$$

The computer program returns the linking numbers of the lifts $\beta_r^j$ and $\beta_l^k$, $j,k=1,2,3$.  They are given by the matrix $$\begin{pmatrix}0&0&1\\0&1&0\\1&0&0\end{pmatrix}.$$  

Next we compute the monodromies of $\gamma_r$ and $\gamma_l$:
$$\mu_{\gamma_r}=Id$$
$$\mu_{\gamma_l}=(23)(13)(12)(23)(12)(13)=(123)$$

The zeroth arc of $\alpha$ is colored $c_0=1$.  Hence $\mu_{\gamma_r}(c_0)=1$ and $\mu_{\gamma_l}(c_0)=3$.

By Theorem~\ref{procedurethm} the signature defect of $6_1$ is the signature of the 1 by 1 matrix whose entry is the linking of $\beta^1-\beta^2$ with itself, namely $\begin{pmatrix}1\end{pmatrix}$.  Hence the $\sigma(W(\alpha,\beta))=1$.  Since $\beta$ is an unknot with zero self-linking it follows that $|\Xi_3(\alpha)|=1$.
\end{ex}
 
\begin{ex}\label{bigexampleprocedure.ex} We compute $\Xi_3(\alpha)$, where $\alpha=\alpha_{1,1}$ is the first knot in the family $\alpha_{a,b}$ pictured in Figure ~\ref{curves.fig}.  Note that $\alpha_{a,b}$ is the knot $C(2a,2,2b,-2,-2a,2b)$ in Appendix A of \cite{kjuchukova2016classification}, and is one of the infinite families of two-bridge ribbon knots discovered by Casson and Gordon~\cite{lamm2000symmetric}.  By Corollary \ref{twobridge}, we know $\Xi_3(\alpha)=\pm 1$, so our goal is to show this independently using Theorem \ref{procedurethm}. 
	
In this example, unlike the previous one, the curves $\omega-i$ come into play.   A Seifert surface $V$ for $\alpha$, a mod $3$ characteristic knot $\beta$ (see Appendix A of \cite{kjuchukova2016classification} for details), and a choice of curves $\omega_i$ are also shown.  A schematic for a link diagram containing $\alpha$, the $\omega_i$, $\beta_r$, and $\beta_l$ is shown in Figure \ref{bigexamplediagram.fig}.  A few sample anchor paths for the $\omega_i$, $\beta_r$, and $\beta_l$ are shown in Figure ~\ref{curveswithanchorpaths.fig}.

We use the computer program in \cite{cahnkjuchukova2016linking} to find all linking numbers of lifts of the $\omega_i$ and $\beta$.  These linking numbers are displayed in Table \ref{linkingnumbers.tab}.  For curves which intersect on $V$, we make a choice of resolution of the intersection point.  The signature is independent of this choice.

		Computing the monodromies for each anchor path (see the previous example for more details), and applying the rule in Theorem \ref{procedurethm}, we find that $\Xi_3(\alpha_{1,1})$ is the signature of the matrix of linking numbers of $\omega_1^1-\omega_1^2$, $\omega_2^1-\omega_2^2$, $\omega_3^2-\omega_3^3$, $\omega_4^1-\omega_4^2$, and $\beta_1-\beta_2$.  This matrix is 
		$$\begin{pmatrix}-2&2&-2&2&0\\2&-1&3&1&-3\\-2&3&-3&-5&-1\\2&1&-5&-2&0\\0&-3&-1&0&-1\end{pmatrix},$$ which has signature $-1$. As in the previous example, $\beta$ is an uknot with zero self-linking.  Therefore $|\Xi_3(\alpha_{1,1})|=1$.
		\end{ex}

\begin{figure}[h]
	\includegraphics[width=4.6in]{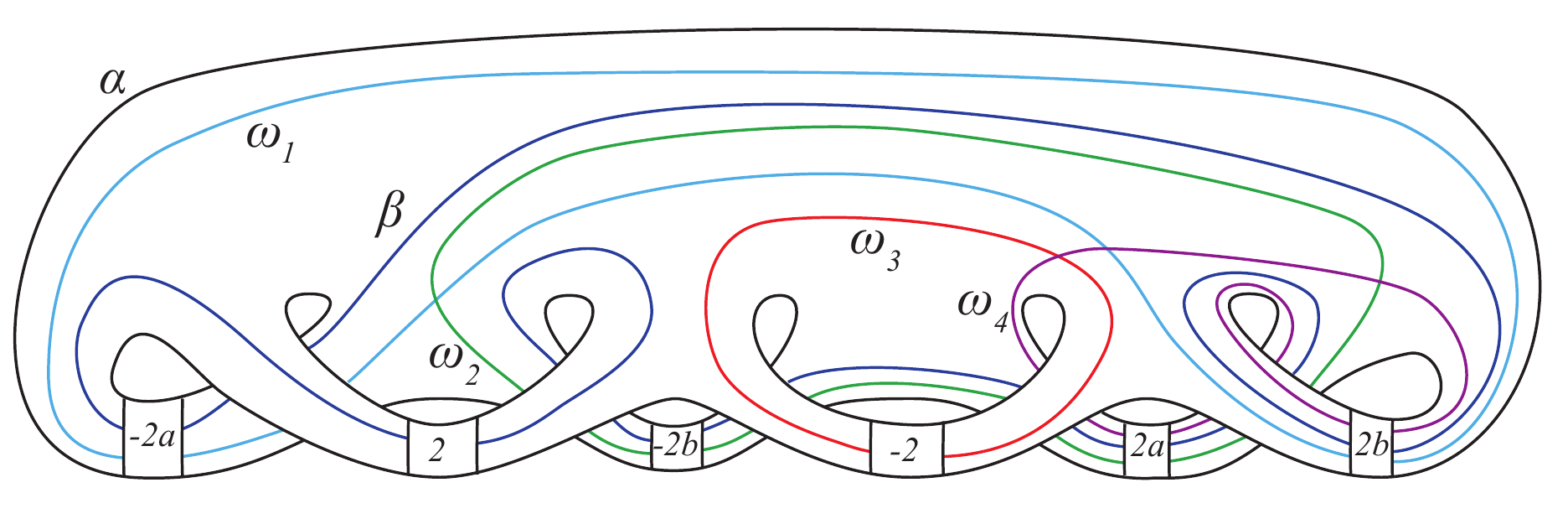}
	\caption{The Seifert surface $V$ of a family of two-bridge slice knots $\alpha(a,b)$, one possible family of characteristic knots $\beta$, and collection of curves which form a basis for $H_1(V-\beta)$.}
	\label{curves.fig}
	\end{figure}
	
	\begin{figure}[h]\includegraphics[width=5in]{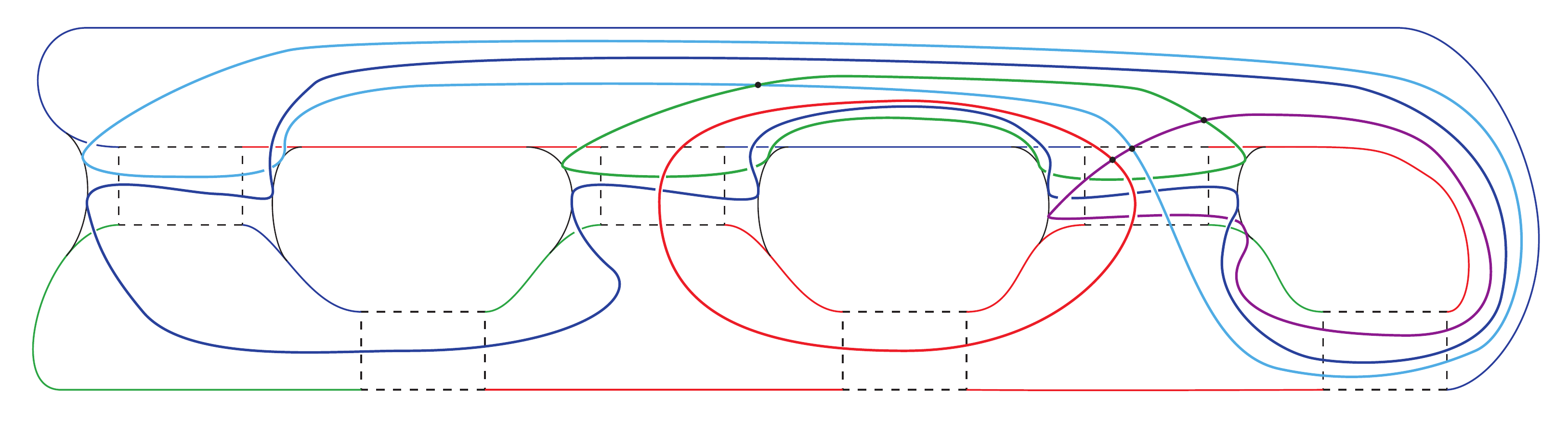}
		\caption{A schematic for drawing the link diagram for $\alpha$ and the curves in Figure \ref{curves.fig}.}
		\label{bigexamplediagram.fig}
		\end{figure}

	\begin{table}[h]
	\tiny
	\centering
	\resizebox{\textwidth}{!}{
	\begin{tabular}{c|c|c|c|c|c|}
		&$\omega_1$&$\omega_2$&$\omega_3$&$\omega_4$&$\beta$\\ \hline
	$\omega_1$	&$\begin{pmatrix} -1&0&0\\0&-1&0\\0&0&-1\end{pmatrix}$&$\begin{pmatrix} 1&0&0\\0&1&0\\0&0&1\end{pmatrix}$&$\begin{pmatrix} 1&-1&0\\-1&1&0\\0&0&0\end{pmatrix}$&$\begin{pmatrix} 1&-1&0\\0&0&0\\-1&1&0\end{pmatrix}$&$\begin{pmatrix}0&0&0\\0&0&0\\0&0&0\end{pmatrix}$\\ \hline
	$\omega_2$&$\begin{pmatrix} 1&0&0\\0&1&0\\0&0&1\end{pmatrix}$&$\begin{pmatrix} 1&1&0\\1&0&1\\0&1&1\end{pmatrix}$&$\begin{pmatrix} 1&1&-1\\0&0&1\\0&0&1\end{pmatrix}$
	&$\begin{pmatrix} 1&1&0\\0&1&1\\1&0&1\end{pmatrix}$&$\begin{pmatrix}-2&-1&0\\1&-1&-1\\0&-1&-2\end{pmatrix}$\\\hline
	
		$\omega_3$&$\begin{pmatrix} 1&-1&0\\-1&1&0\\0&0&0\end{pmatrix}$&$\begin{pmatrix} 1&0&0\\1&0&0\\-1&1&1\end{pmatrix}$
	&$\begin{pmatrix} -3&3&0\\3&-3&0\\0&0&0\end{pmatrix}$&$\begin{pmatrix} -2&3&0\\0&0&1\\3&-2&0\end{pmatrix}$
	&$\begin{pmatrix}0&-1&-2\\-2&-1&0\\-1&-1&-1\end{pmatrix}$\\
		\hline
	$\omega_4$&$\begin{pmatrix} 1&0&-1\\-1&0&1\\0&0&0\end{pmatrix}$&
	$\begin{pmatrix} 0&0&1\\1&0&0\\0&1&0\end{pmatrix}$
	&	$\begin{pmatrix} -2&0&2\\2&0&-2\\0&0&0\end{pmatrix}$
		&$\begin{pmatrix} -2&0&2\\0&0&0\\2&0&-2\end{pmatrix}$&$\begin{pmatrix}0&0&0\\0&0&0\\0&0&0\end{pmatrix}$\\
	\hline
	$\beta$&$\begin{pmatrix}0&0&0\\0&0&0\\0&0&0\end{pmatrix}$&$\begin{pmatrix}-2&1&0\\-1&-1&-1\\0&-1&-2\end{pmatrix}$&$\begin{pmatrix}0&-2&-1\\-1&-1&-1\\-2&0&-1\end{pmatrix}$&$\begin{pmatrix}0&0&0\\0&0&0\\0&0&0\end{pmatrix}$&$\begin{pmatrix} -1& 0& 1\\ 0& 0& 0\\ 1& 0& -1\end{pmatrix}$\\\hline
	\end{tabular}
	}
	\caption{Linking numbers of lifts of the curves $\omega_i$ in Figure~\ref{curves.fig}, with $a=b=1$.     }\label{linkingnumbers.tab}
	\end{table}
		
			\begin{figure}[h]
		\includegraphics[width=4.6in]{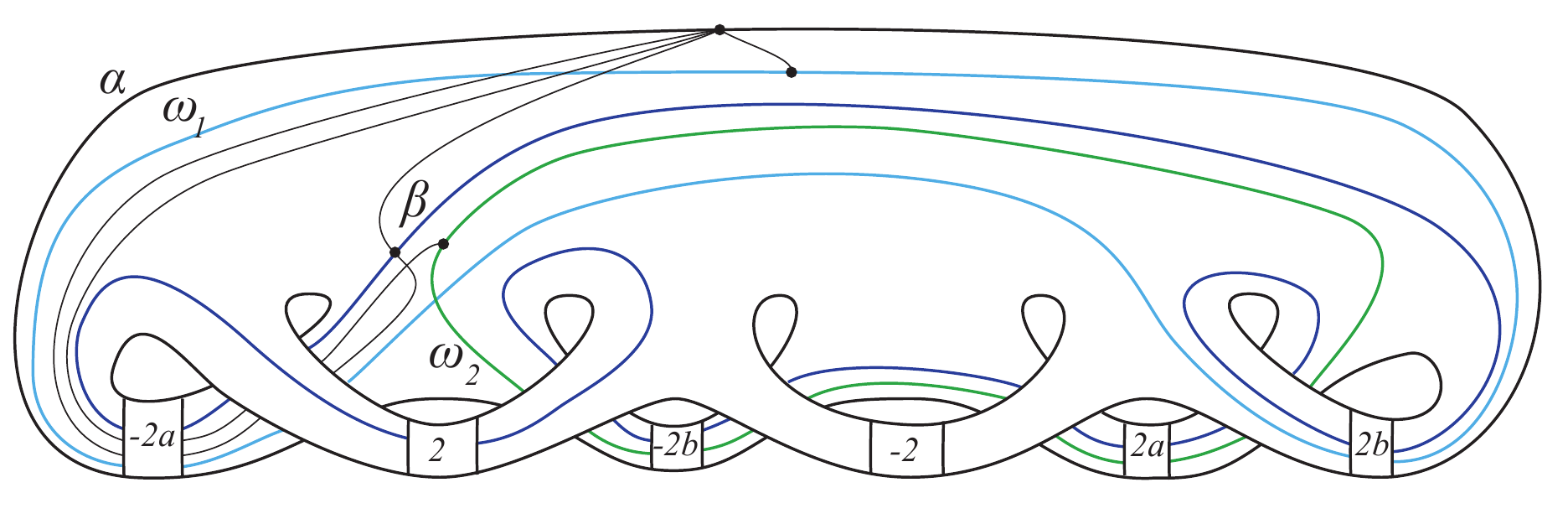}
		\caption{Anchor paths for $\omega_1$, $\omega_2$, and the right and left push-offs of $\beta$.}
		\label{curveswithanchorpaths.fig}
		\end{figure}

		\section{Proof of Theorem~\ref{procedurethm} and the Cappell-Shaneson construction}\label{octopus}
		
Before proving Theorem~\ref{procedurethm} we briefly review the Cappell-Shaneson construction of $M_\alpha$, the irregular dihedral cover of $S^3$ branched along $\alpha$, and a cobordism $W_{\alpha,\beta}$ between $M_\alpha$ and the cyclic cover of $S^3$ along $\beta$, a characteristic knot for $\alpha$.  We again focus on the case $p=3$, but our combinatorial procedure can be generalized for all odd $p$.
	
\subsection{The Capell-Shaneson Construction of the Irregular Dihedral Cover}\label{dih-con}  Let $V$ be a Seifert surface for $\alpha$. Cappell and Shaneson showed that an irregular $p$-fold dihedral branched cover of $S^3$ along $\alpha$ can be obtained from the $p$-fold cyclic branched cover of $S^3$ along a characteristic knot $\beta$ as follows. Roughly speaking, one begins with the $p$ fold cyclic cover $M_\beta$, removes a neighborhood $J$ of the union of the preimages of $V$ from $M_\beta$ to get a 3-manifold with boundary $\partial J$, and identifies points on that boundary via an involution $\bar{h}$ defined below.  The resulting closed manifold is the $p$-fold irregular dihedral cover of $S^3$ branched along $\alpha$.  The surface $S=\bar{h}(\partial J)$ sits inside this covering space, and has boundary equal to the index 1 lift of $\alpha$.  The index 2 lift of $\alpha$ is an embedded curve on $S$.

In order to compute the signature of $Y$, we must compute a matrix of linking numbers of certain elements of $H_1(S;\mathbb{Z})$; namely, a basis for the kernel of the map $i_*:H_1(S; \mathbb{Z})\rightarrow H_1(J;\mathbb{Z})$, where the inclusion $i$ is given by the composition $S=\bar{h}(\partial J)\hookrightarrow \partial J\hookrightarrow J$.

Now we describe the construction in detail and introduce the necessary notation.   Let $f:M_\beta\rightarrow S^3$ be a $3$-fold cyclic covering map branched along $\beta$.  By the construction of Cappell and Shaneson \cite{CS1984linking}, we know that $M_{\alpha}$ can be obtained from $M_\beta$ as follows. Let $J=f^{-1}(V\times[-1,1])\subset M_\beta$. Let $h:V\times[-1,1]\rightarrow V\times[-1,1]$ be given by $h(x,t)=h(x,-t)$.  Let $\bar{h}:\partial J\rightarrow \partial J$ be the lift of $h$ to $M_\beta$ restricted to $\partial J$.  Cappell and Shaneson show that $M_{\alpha}$ is homeomorphic to $(M_\beta-\mathring{J})/\{\bar{h}(x)\sim x \text{ for } x\in \partial J\}$, and that the mapping cone $W_{\alpha,\beta}$ of $\bar{h}$ is a cobordism from the $3$-fold cyclic cover $M_\beta$ to the irregular dihedral cover $M_{\alpha}$.  The surface $S=\bar{h}(\partial J)$ is embedded in $M_{\alpha}$, and has one boundary component $\alpha_0$, the index 1 lift of $\alpha$. 

\begin{figure}[htbp]\includegraphics[width=3in]{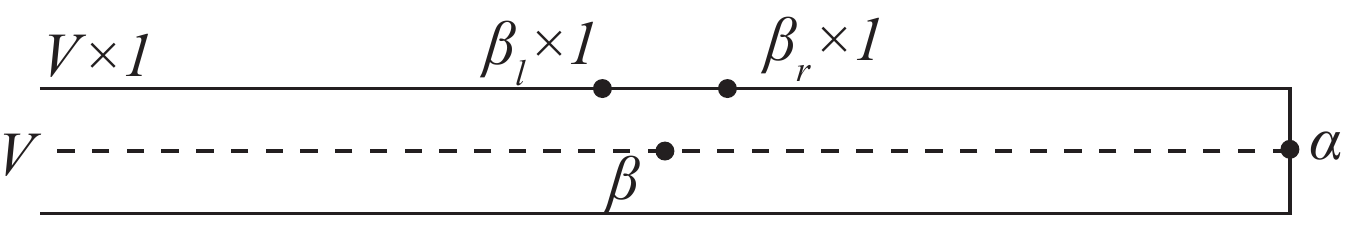}
	\caption{$V$ is a Seifert surface for $\alpha$.  The needed push-offs of $\beta$ in $V\times[-1,1]$ are shown above.}\label{VtimesI.fig}
	\end{figure}

Let $V-\beta$ denote the surface $V$ cut along $\beta$, which we obtain by removing a thin annulus between the right and left push-offs $\beta_r$ and $\beta_l$ of $\beta$ in $V$ (note that $\beta$ is oriented).  More concretely, $S$ can be obtained abstractly by gluing together three copies of $V-\beta$ as follows.  There are three lifts of $(V-\beta)\times 1$ in $M_\beta$, which we label $V_0$, $V_1$, and $V_2$, according to the action of the deck transformation group.  Let $\alpha_0$, $\alpha_1$ and $\alpha_2$ denote the corresponding lifts of $\alpha$. Each $V_i$ contains lifts of the curves $\beta_r\times 1$ and $\beta_l\times 1$, and we denote these by $\beta_{i,r}$ and $\beta_{i,l}$.  See Figures \ref{VtimesI.fig} and \ref{halfstar.fig}.
\begin{figure}[htbp]
	\begin{subfigure}[b]{.4\textwidth}
	\includegraphics[width=2.8in]{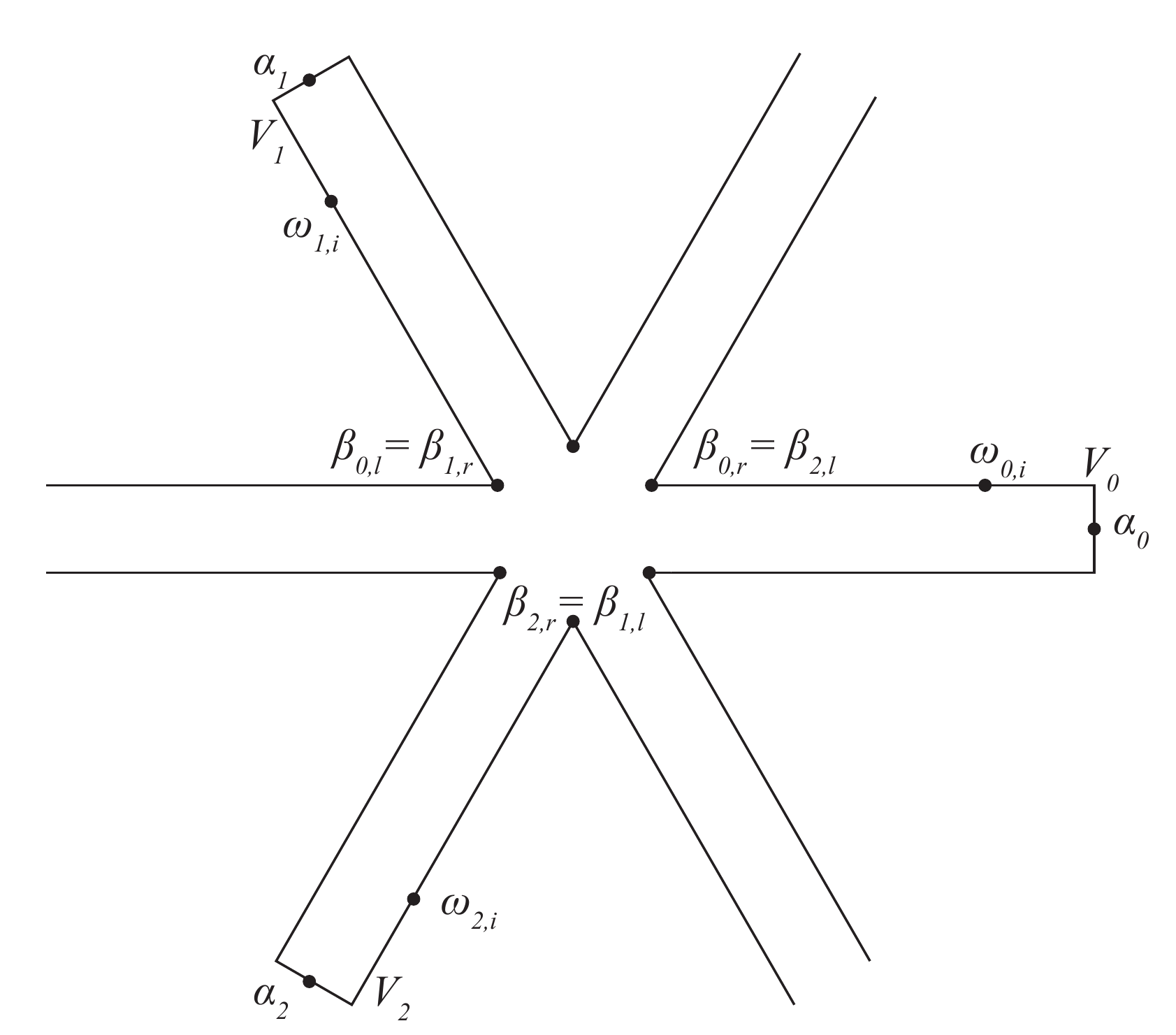}
	\end{subfigure} \quad\quad\quad\raisebox{1in}{$\xrightarrow{\bar{h}}$}\quad
	\begin{subfigure}[b]{.4\textwidth}
	\raisebox{.75in}{\includegraphics[width=2.8in]{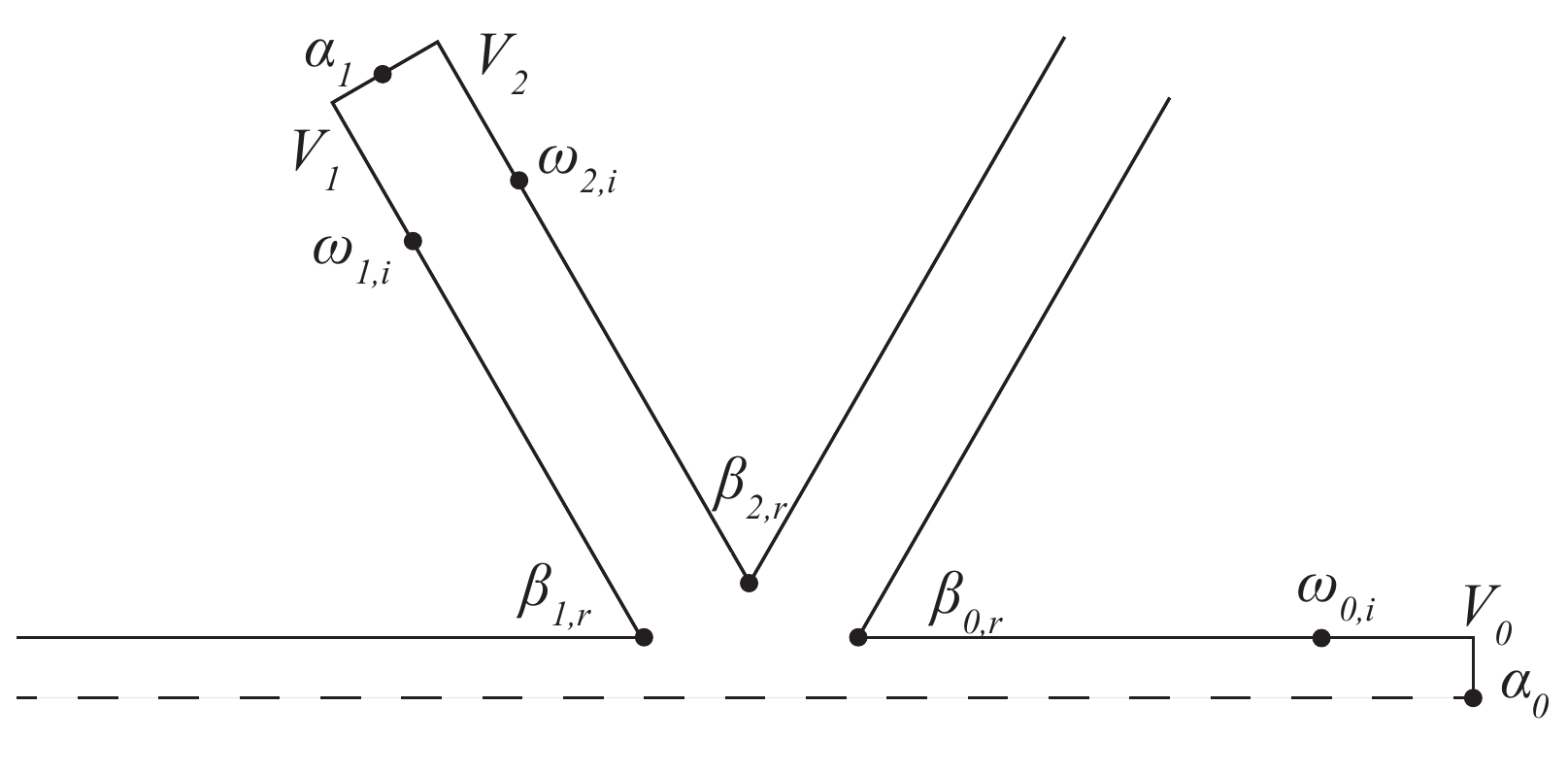}}
	\end{subfigure}
	\caption{The image of $J$ under the involution $\bar{h}$.} \label{halfstar.fig}
	\end{figure}

 From Figure \ref{halfstar.fig}, we can read off the boundaries of the surfaces $V_i$:

$$\partial V_0= \alpha_0 +\beta_{0,r}-\beta_{0,l}$$
$$\partial V_1=\alpha_1 + \beta_{1,r}-\beta_{1,l} $$
$$\partial V_2=\alpha_2 + \beta_{2,r}-\beta_{2,l}.$$

Now we construct $S$ by gluing together $V_0$, $V_1$, and $V_2$ using the following identifications: $\beta_{0,l}$ is identified with $\beta_{1,r}$, $\beta_{1,l}$ is identified with $\beta_{2,r}$, and $\beta_{2,l}$ is identified with $\beta_{0,r}$.  In addition $\alpha_1$ and $\alpha_2$ are identified.  The index 1 and index 2 branch curves are $\alpha_0=\partial S$ and $\alpha_1$ respectively.  Note that $\beta_{0,r}$ and $\beta_{1,r}$ are homologous in $S$, as they cobound $V_0$ together with $\alpha_0$.  The surface $S$, constructed using these identifications, is pictured in Figure \ref{halfoctopus.fig}, in the case where $V$ has genus one and each $V_i$ is a pair of pants.  This is in fact the case in our first example, where $\alpha$ is the knot $6_1$.  In general the genus of $V_i$ is one less than the genus of $V$.

\begin{figure}[htbp]
	\includegraphics[width=2in]{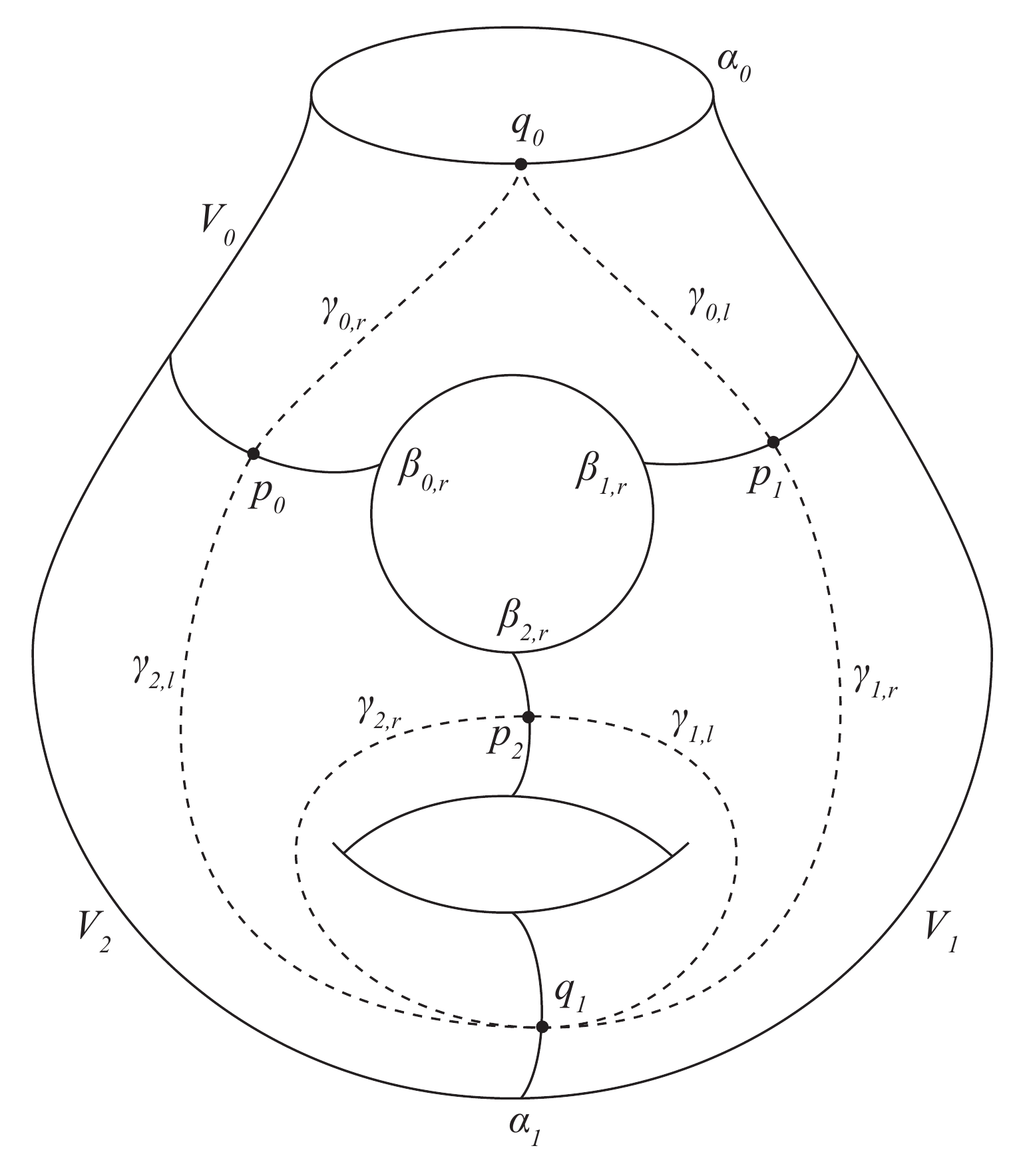}
	\caption{The surface $S$, together with the index one and two branch cuves $\alpha_0$ and $\alpha_1$, the three lifts of $\beta$, and the three lifts of $\gamma_r$ and $\gamma_l$.}
\label{halfoctopus.fig}
	\end{figure}
		
\subsection{Proof of Theorem \ref{procedurethm}} Corollary 2.4 of \cite{kjuchukova2016classification} describes a basis for $\ker i_*$.  The signature defect is the signature of the matrix of linking numbers of elements of $\ker i_*$.  Here we give a geometric description of the elements of $\ker i_*$.  Then we use anchor paths and their monodromies to describe these curves combinatorially using only diagrammatic information, proving Theorem \ref{procedurethm}.
		
		Let $q$ be a point on $\alpha$, and let $p$ be a point on $\beta$. Let $\{\omega_1,\dots,\omega_{2g-2}\}\cup \{\beta_r,\beta_l\}$ be a basis for $H_1(V-\beta;\mathbb{Z})$, where $g$ is the genus of $V$.  Each curve $\omega_i$ in $V-\beta$ has three lifts $\omega_{0,i}$, $\omega_{1,i}$ and $\omega_{2,i}$ to $S\subset M_{\alpha}$. From Figure \ref{halfstar.fig}, it is evident that the differences of curves $\beta_{0,r}- \beta_{2,r}$ and $\omega_{1,i}-\omega_{2,i}$ form a basis for $\ker i_*$. 
		
Now we use anchor paths to describe these curves diagramatically.  Let $\gamma_r$ and $\gamma_l$ be embedded paths from $p$ to $q$ in $V-\beta$ such that the concatenation $\gamma_r\cdot \gamma_l$ is a curve intersecting $\beta$ once transversely, and which completes $\{\beta\}\cup\{\omega_i\}$ to a basis for $H_1(V)$.  The lifts of $\gamma_r$ and $\gamma_l$ to $S\subset M_{\alpha}$ are pictured in Figure \ref{halfoctopus.fig}.  The lifts $\gamma_{0,r}$ and $\gamma_{0,l}$ of $\gamma_r$ and $\gamma_l$ beginning at the lift $q_0$ of $q$ on $\alpha_0$ end on $\beta_{0,r}$ and $\beta_{1,r}$.  

		 If $\delta$ is a path in $V-\beta$ from a point $r$ on $\omega$ to the point $q$ on $\alpha$, then the lift $\delta_0$ of $\delta$ to $V_0$ connects the point $r_0$ on $\omega_{0,i}$ to the point $q_0$ on the index 1 curve $\alpha_0$, while the other lifts $\delta_1$ and $\delta_2$ of $\delta$ connect points $r_1$ and $r_2$ on $\omega_{1,i}$ and $\omega_{2,i}$ to the point $q_1$ on the index two curve $\alpha_1$. See Figure \ref{halfoctopuswithomega.fig}. Reformulating this information in terms of our cell structure yields Theorem \ref{procedurethm}.  \qed
		 
		 \begin{figure}\includegraphics[width=2in]{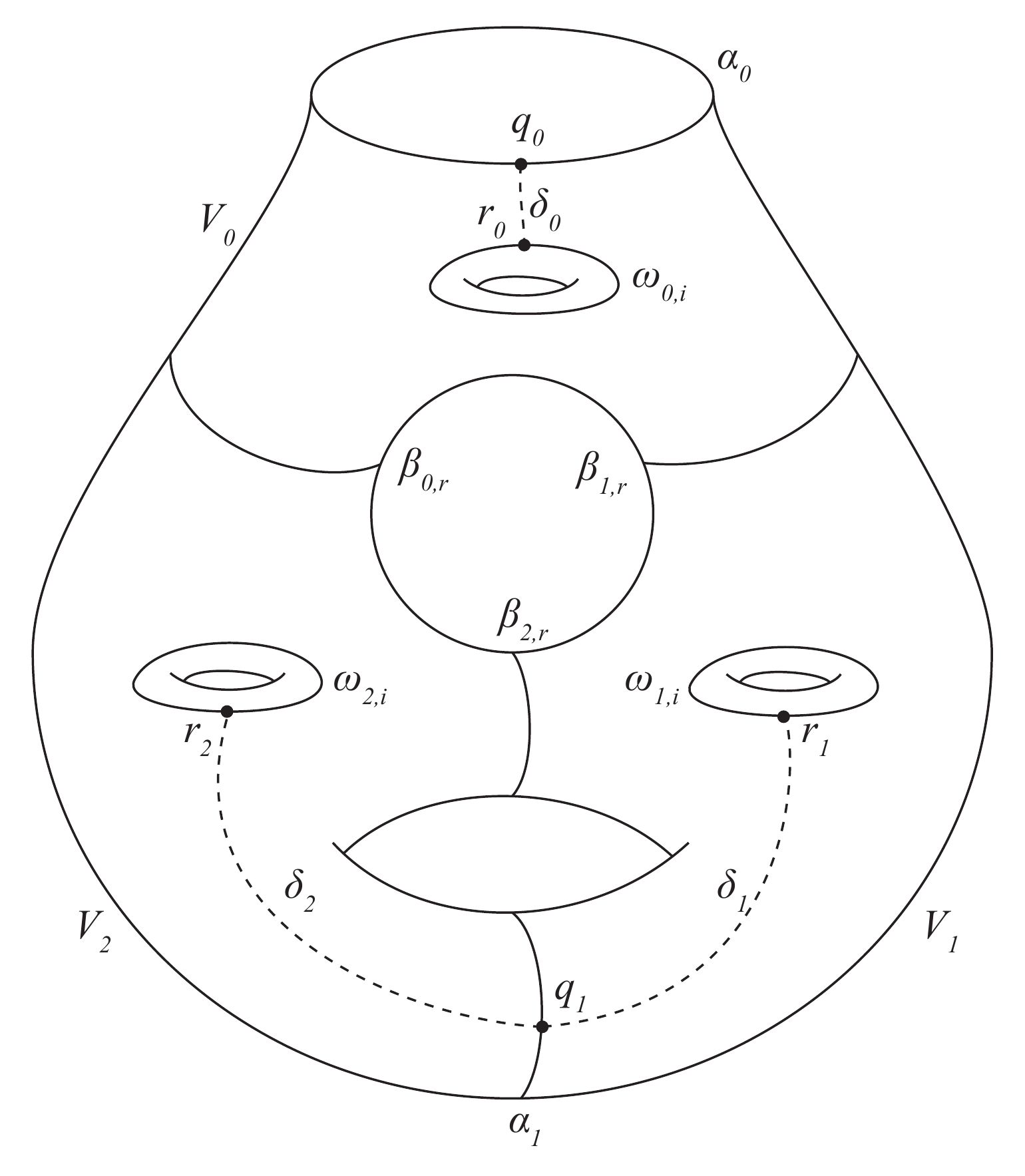}
	 \caption{Lifts to the surface $S$ of an anchor path for $\omega_i$.}
	 \label{halfoctopuswithomega.fig}
	 \end{figure}
 
\section*{Acknowledgment}
	We would like to thank Julius Shaneson and Sebastian Baader for helpful discussions.  

\vspace{.5in}

Patricia Cahn\\
Smith College\\
{\it pcahn@smith.edu}

Alexandra Kjuchukova\\
University of Wisconsin -- Madison\\
{\it kjuchukova@wisc.edu}

\pagebreak
%\nocite{alexander1920note}
%\nocite{burde1971branched}
%\nocite{CS1975branched}
%\nocite{CS1984linking}
%\nocite{fox1957covering}
%\nocite{riley1971homomorphisms}
%\nocite{hartley1979metabelian}
%\nocite{hartley1977covering}
%\nocite{lamm2000symmetric}
%\nocite{lisca2007lens}
%\nocite{montesinos1974representation}
%\nocite{perko1974covering}
%\nocite{perko1976dihedral}
%\nocite{reidemeister1929knoten}
\bibliographystyle{amsplain}
\bibliography{BrCovBib}

\providecommand{\bysame}{\leavevmode\hbox to3em{\hrulefill}\thinspace}
\providecommand{\MR}{\relax\ifhmode\unskip\space\fi MR }
% \MRhref is called by the amsart/book/proc definition of \MR.
\providecommand{\MRhref}[2]{%
  \href{http://www.ams.org/mathscinet-getitem?mr=#1}{#2}
}
\providecommand{\href}[2]{#2}
\begin{thebibliography}{10}

\bibitem{cahnkjuchukova2016linking}
Patricia Cahn and Alexandra Kjuchukova, \emph{Linking numbers in
  three-manifolds}, arXiv preprint arXiv:1611.10330 (2016).

\bibitem{CS1984linking}
Sylvain Cappell and Julius Shaneson, \emph{Linking numbers in branched covers},
  Contemporary Mathematics \textbf{35} (1984), 165--179.

\bibitem{fox1970metacyclic}
Ralph Fox, \emph{Metacyclic invariants of knots and links}, Canad. J. Math
  \textbf{22} (1970), 193--201.

\bibitem{freedman1982topology}
Michael Freedman, \emph{The topology of four-dimensional manifolds}, Journal of
  Differential Geometry \textbf{17} (1982), no.~3, 357--453.

\bibitem{gaykirby2016trisections}
David Gay and Robion Kirby, \emph{Trisecting 4-manifolds}, Geometry and
  Topology \textbf{20} (2016), no.~6, 3097--3132.

\bibitem{greene2011slice}
Joshua Greene and Stanislav Jabuka, \emph{The slice-ribbon conjecture for
  3-stranded pretzel knots}, American journal of mathematics \textbf{133}
  (2011), no.~3, 555--580.

\bibitem{hilden1974every}
Hugh Hilden, \emph{Every closed orientable 3-manifold is a 3-fold branched
  covering space of ${S}^3$}, Bulletin of the American Mathematical Society
  \textbf{80} (1974), no.~6, 1243--1244.

\bibitem{kjuchukova2016classification}
Alexandra Kjuchukova, \emph{On the classification of irregular dihedral
  branched covers of topological four-manifolds}, arXiv preprint
  arXiv:1608.03329 (2016).

\bibitem{kjorr2017admissible}
Alexandra Kjuchukova and Kent Orr, \emph{Admissible singularities on dihedral
  covers between four-manifolds}, In preparation.

\bibitem{lamm2000symmetric}
Christoph Lamm, \emph{Symmetric unions and ribbon knots}, Osaka Journal of
  Mathematics \textbf{37} (2000), no.~3, 537--550.

\bibitem{lisca2007lens}
Paolo Lisca, \emph{Lens spaces, rational balls and the ribbon conjecture},
  arXiv preprint math/0701610 (2007).

\bibitem{meier2016classification}
Jeffrey Meier, Trent Schirmer, and Alexander Zupan, \emph{Classification of
  trisections and the generalized property r conjecture}, Proceedings of the
  American Mathematical Society \textbf{144} (2016), no.~11, 4983--4997.

\bibitem{meierzupan}
Jeffrey Meier and Alexander Zupan, \emph{Bridge trisections of knotted surfaces
  in $s^4$}.

\bibitem{montesinos1974representation}
Jos{\'e}~Mar{\'\i}a Montesinos, \emph{A representation of closed orientable
  3-manifolds as 3-fold branched coverings of ${S}^3$}, Bulletin of the
  American Mathematical Society \textbf{80} (1974), no.~5, 845--846.

\end{thebibliography}

\end{document}